\DeclareMathSymbol{\mlq}{\mathord}{operators}{``}
\DeclareMathSymbol{\mrq}{\mathord}{operators}{`'}
\title{Descent for coherent sheaves along formal/open coverings}
\date{January 13, 2016}
\author{Fritz H\"ormann\\ Mathematisches Institut, Albert-Ludwigs-Universit\"at Freiburg}
\newtheorem{SATZ}{Theorem}[section]
\newtheorem{HAUPTSATZ}[SATZ]{Main Theorem}
\newtheorem{LEMMA}[SATZ]{Lemma}
\newtheorem{DEF}[SATZ]{Definition}
\newtheorem{PROP}[SATZ]{Proposition}
\newtheorem{BEISPIEL}[SATZ]{Example}
\newtheoremstyle{bare}        
  {}            
  {}            
  {\normalfont}                 
  {}                            
  {\bfseries}                   
  {}                            
  {.0em}                           
  {\thmnumber{#2}#1. \thmnote{\normalfont\textsc{(#3)}} } 
\theoremstyle{bare}
\newtheorem{PAR}[SATZ]{}
\newcommand{\comment}[1]{}
\newcommand{\Z}{ \mathbb{Z} }
\newcommand{\N}{ \mathbb{N} }
\newcommand{\A}{ \mathbb{A} }
\newcommand{\OO}{ {\cal O} }
\DeclareMathOperator{\colim}{colim}
\DeclareMathOperator{\id}{id}
\DeclareMathOperator{\op}{op}
\DeclareMathOperator{\cocart}{cocart}
\DeclareMathOperator{\Hom}{Hom}
\DeclareMathOperator{\End}{End}
\DeclareMathOperator{\Mor}{Mor}
\DeclareMathOperator{\pr}{pr}
\DeclareMathOperator{\Dia}{Dia}
\newcommand{\cat}[1]{ {[\textnormal{ \textbf{#1} }]} }
\begin{document}

\maketitle

{\footnotesize  {\em 2010 Mathematics Subject Classification:} 14C20, 14B20, 18F20, 13J10   }

{\footnotesize  {\em Keywords: coherent sheaves, descent, divisors of strict normal crossings, formal schemes}  }

\section*{Abstract}

For a regular noetherian scheme $X$ with a divisor with strict normal crossings $D$ we prove that coherent sheaves satisfy descent w.r.t.\@ the `covering' consisting of the open parts in the various completions of $X$ along the components of $D$ and their intersections.

\section*{Introduction}

Let $X$ be a regular noetherian scheme and $D \subset X$ a divisor with strict normal crossings (cf.\@ Definition~\ref{DEFDSNC}).
In applications it is useful to have a descent theory for coherent sheaves on $X$ relative to a `covering' constisting of the open parts in the various completions of $X$ along the components of $D$ and their intersections. For example when $X$ is the (integral model of a) toroidal compactification of a Shimura variety, these completions can be described as completions
of relative torus embeddings on mixed Shimura varieties. In the case that $D$ consists of one component, these questions have been treated in the literature, see e.g.\@ \cite{MB96, BL95, Bha14}, \cite[Tag 0BNI]{stacks-project}.
In this short article, we generalize these results to arbitrary divisors with normal crossings, sticking to the noetherian case, however. The result is as follows:   

Let $\{Y\}$ be the coarsest stratification of $X$ into locally closed subschemes such that every component of $D$ is the closure $\overline{Y}$ of a stratum $Y$. For each stratum we define a sheaf of $\OO_X$-algebras on $X$:
\begin{equation}\label{eqintro1}
 R_Y(U) := C_{\overline{Y}} \OO_X(U') =  \lim_n \OO_X(U')/\mathcal{I}_{\overline{Y}}(U')^n   
 \end{equation}
where $U$ is an open subset of $X$, and $U' \subset U$ is such that $U' \cap \overline{Y} = U \cap Y$. The definition of $R_Y$ does not depend on the choice of $U'$ for $R_Y$ may also be described
as the push-forward $\iota_* \OO_{C_{\overline{Y}}X|_Y}$, where $C_{\overline{Y}}X$ is the formal completion of $X$ along $\overline{Y}$, and $C_{\overline{Y}}X|_Y$ is the
open formal subscheme with underlying topological space equal to $Y$ and $\iota$ is the composed morphism of formal schemes $C_{\overline{Y}}X|_Y \rightarrow X$. 

For any chain of disjoint strata $Y_1, Y_2, \dots, Y_n$ such that $Y_i \subset \overline{Y_{i-1}}$ for $i=2, \dots, n$, we define inductively a sheaf $R_{Y_1, \dots, Y_n}$ of $\OO_X$-algebras on $X$
which coincides with (\ref{eqintro1}) for $n=1$. For $n>1$ we set
\begin{equation}\label{constructr}
 R_{Y_1, \dots, Y_n}(U) := C_{\overline{Y_1}} (R_{Y_2, \dots, Y_n}(U) \otimes_{\OO_X(U)} \OO_X(U')).    
 \end{equation}
Here $\otimes$ is the usual tensor product (not completed!) and $U' \subset U$ is again such that $U' \cap \overline{Y_1} = U \cap Y_1$. Again this definition does not depend on $U'$. 

Let 
\[ \cat{$(\int S, R)$-coh-cocart}  \]
be  the category of the following descent data (it will be defined in a different way in the article, which will explain also the notation):
For each stratum $Y$ a coherent sheaf $M_Y$ of $R_Y$-modules together with isomorphisms
\[  \rho_{Y,Z}: M_Y \otimes_{R_Y} R_{Y,Z} \rightarrow M_Z \otimes_{R_Z} R_{Y,Z} \]
for any $Y, Z$ with $Z \subset \overline{Y}$, 
which are compatible w.r.t.\@ any triple $Y, Z, W$ of strata with $Z \subset \overline{Y}$ and $W \subset \overline{Z}$ in the obvious way.

Let
\[ \cat{$\OO_X$-coh} \]
be the category of coherent sheaves on $X$.

Then we have

\vspace{0.2cm}

{\bf Main Theorem \ref{MAINTHEOREM}.} {\em The natural functor
\[ \cat{$\OO_X$-coh} \rightarrow \cat{$(\int S, R)$-coh-cocart}  \]
is an equivalence of categories. }

\section{Generalities}

\begin{PAR}
Let $\mathcal{D} \rightarrow \mathcal{S}$ be a bifibered category such that the fibers have all limits and all colimits. 
We will be interested mainly in the following two cases 
\[ \cat{mod} \rightarrow \cat{ring} \]
where $\cat{ring}$ is the category of commutative rings with 1 and $\cat{mod}$ is the bifibered category of modules over such rings.
Furthermore let $X$ be a topological space. 
Then we consider
\[ \cat{$X$-mod} \rightarrow \cat{$X$-ring} \]
where $\cat{$X$-ring}$ is the category of ring sheaves on $X$ {\em which are coherent over themselves in the sense of ringed spaces}
and $\cat{$X$-mod}$ is the bifibered category of sheaves of modules over such ring sheaves. 

Back in the general case, for a morphism $f \in \Mor(\mathcal{S})$ we denote by $f_\bullet$ and $f^\bullet$ the corresponding push-forward and pull-back functors. 
By definition of a bifibered category $f_\bullet$ is always left adjoint to $f^\bullet$. 
Those functors are only
defined up to a (unique) natural isomorphism. Whenever we write $f_\bullet$ or $f^\bullet$ we assume that a choice has been made. 
For example, for a morphism $f: S \rightarrow T$ of rings, $f^\bullet$ is the forgetful map that considers
a $T$-module as an $S$-module via $f$ (hence there is a canonical choice in this case) and $f_\bullet$ is its left-adjoint, the functor $- \otimes_S T$ (which is also 
only defined up to a unique natural isomorphism). 
\end{PAR}

\begin{PAR}
Pairs $(I, S)$ consisting of a diagram $I$ (i.e.\@ a small category) and a functor $S: I \rightarrow \mathcal{S}$
form a 2-category $\Dia^{\mathrm{op}}(\mathcal{S})$, called the category of diagrams in $\mathcal{S}$ (cf. also \cite{HOR15}). 
A morphism of $\mathcal{S}$-diagrams $(\alpha, \mu): (I, S) \rightarrow (J, T)$ is a functor $\alpha: I \rightarrow J$ together with a 
natural transformation $\mu: T \alpha \Rightarrow S$. A 2-morphism $(\alpha, \mu) \Rightarrow (\beta, \nu)$ is a 
natural transformation $\kappa: \alpha \Rightarrow \beta$ such that $\nu \circ (T \ast \kappa) = \mu$.
\end{PAR}

\begin{PAR}
For each pair $(I, S)$, we define the category $\mathcal{D}(I, S)$ of $(I, S)$-modules, whose objects are lifts of
the functor $S$
\[ \xymatrix{
&&  \mathcal{D} \ar[d] \\
I \ar[rr]_-S \ar[rru]^-M &&  \mathcal{S}
} \]
to the given bifibered category, and whose morphisms are natural transformations between those. 
An object is called {\bf coCartesian} if all the morphisms $M(\rho)$ for $\rho: i \rightarrow j$ are coCartesian. 
This defines a full subcategory $\mathcal{D}(I, S)^{\cocart}$ of $\mathcal{D}(I, S)$. If $p: I \rightarrow E$ is a functor between small categories, we also define the subcategory of {\bf $E$-coCartesian} objects as those for which the morphisms $M(\rho)$ are coCartesian
for all $\rho$ such that $p(\rho)$ is an identity.
\end{PAR}

\begin{PAR}We need a refinement of the categories defined above. Suppose we are given a full subcategory $\mathcal{D}^{\mathrm{f}}$ of $\mathcal{D}$ whose objects shall be called {\bf finite}.
We assume that $\mathcal{D}^{\mathrm{f}} \rightarrow \mathcal{S}$ is still opfibered (i.e.\@ push-forward preserves finiteness) but not necessarily fibered.
We define the full subcategories $\mathcal{D}(I, S)^{\mathrm{f}}$ and $\mathcal{D}(I, S)^{\mathrm{f}, \cocart}$ requiring point-wise finiteness.  
\end{PAR}

\begin{DEF}
In the example $\cat{mod} \rightarrow \cat{ring}$ an object $M \in \cat{mod}$ over a ring $R$ is finite, if it is a finitely generated $R$-module. 

In the example $\cat{$X$-mod} \rightarrow \cat{$X$-ring}$ an object $M \in \cat{$X$-mod}$ lying over the sheaf of rings $R$ is finite, if it is {\bf coherent} in the sense of ringed spaces, i.e.\@ locally on $X$, we have an exact sequence
\[ R|_U^n \rightarrow R|_U^m \rightarrow M|_U \rightarrow 0 \]
for some $n, m \in \N_0$. 
\end{DEF}

\begin{PAR}
For each morphism $(\alpha, \mu): (I, S) \rightarrow (J, T)$, we have a corresponding pull-back $(\alpha, \mu)^*$ given by 
\[ ((\alpha, \mu)^* M)_i =  \mu(i)_\bullet M_{\alpha(i)} \quad \forall i \in I.  \]
$(\alpha, \mu)^*$ has a right adjoint $(\alpha, \mu)_*$ given by Kan's formula
\[ ((\alpha, \mu)_* M)_j =  \lim_{j \times_{/J} I} T(\nu)^\bullet \mu(i)^\bullet M_{i} \quad \forall j \in J,  \]
where an object in the slice category $j \times_{/J} I$ is denoted by a pair $(i, \nu)$, with $i \in I$ and $\nu: j \rightarrow \alpha(i)$. 
If $(\alpha, \mu): (I, S) \rightarrow (J, T)$ is purely of diagram type, i.e.\@ if $\mu: T \circ \alpha \rightarrow S$ is the identity, then $(\alpha, \mu)^*$ does have also a left adjoint $(\alpha, \mu)_!$ given by
\[ ((\alpha, \mu)_! M)_j = \colim_{I \times_{/J}  j} T(\nu)_\bullet M(i) \quad \forall j \in J,  \]
where an object in the slice category $I \times_{/J} j$ is denoted by a pair $(i, \nu)$, with $i \in I$ and $\nu: \alpha(i) \rightarrow j$. 
\end{PAR}

\begin{BEISPIEL}For the fibered category $\cat{mod} \rightarrow \cat{ring}$ and
for each morphism $(\alpha, \mu): (I, S) \rightarrow (J, T)$, the pull-back is given by 
\[ ((\alpha, \mu)^* M)_i =  M_{\alpha(i)} \otimes_{T_{\alpha(i)}} S_i \quad \forall i \in I,  \]
and
\[ ((\alpha, \mu)_* M)_j =  \lim_{j \times_{/J} I} M_{i} \quad \forall j \in J,  \]
where each $M_{i}$ is considered an $T_j$-module via the composition $T_{j} \rightarrow T_{\alpha(i)} \rightarrow S_i $.
If $\mu: T \circ \alpha \rightarrow S$ is the identity, then
\[ ((\alpha, \mu)_! M)_j = \colim_{I \times_{/J}  j} M(i) \otimes_{S_{\alpha(i)}} S_j \quad \forall j \in J.  \]
\end{BEISPIEL}

\section{Morphisms of (finite) descent}

We keep the notations introduced in the previous section. 

\begin{DEF}
We say that a morphism $(\alpha, \mu)$ in $\Dia^{\mathrm{op}}(\mathcal{S})$  is {\bf of descent} if  
\[ (\alpha, \mu)^*: \mathcal{D}(J, T)^{\cocart} \rightarrow \mathcal{D}(I, S)^{\cocart} \]
is an equivalence of categories. 

We say that a morphism $(\alpha, \mu)$ in $\Dia^{\mathrm{op}}(\mathcal{S})$  is {\bf of finite descent} if  
\[ (\alpha, \mu)^*: \mathcal{D}(J, T)^{\mathrm{f}, \cocart} \rightarrow \mathcal{D}(I, S)^{\mathrm{f}, \cocart} \]
is an equivalence of categories. 
\end{DEF}

That $(\alpha, \mu)$ is of descent 
does not imply that $(\alpha, \mu)_*$ is an inverse to the equivalence $(\alpha, \mu)^*$. This holds if and only if $(\alpha, \mu)_*$ preserves coCartesian objects.
Obviously $(\alpha, \mu)_*$ always preserves coCartesian objects if $J=\{\cdot\}$. 

The following proposition lists some of the basic properties of this formalism. Assertions 1.--4. are very general and hold also in the context of an arbitrary fibered derivator. 
Assertions 5.--6. are specific to the situation of an ordinary fibered category. 
\begin{PROP}\label{PROPDESCENT}
\begin{enumerate}
\item The morphisms of descent  (resp.\@ of finite descent) satisfy the 2-out-of-3 property. 
\item For two morphisms of diagrams in $\mathcal{S}$
\[ \xymatrix{ (I, S) \ar@/^3pt/[r]^{(\alpha, \mu)} & (J,T) \ar@/^3pt/[l]^{(\beta, \nu)} } \]
such that sequences of 2-morphisms $(\alpha, \mu) \circ (\beta, \mu) \Rightarrow \cdots \Leftarrow \id$ and $ (\beta, \mu) \circ (\alpha, \mu) \Rightarrow \cdots \Leftarrow \id$ exist, we have that $(\alpha, \mu)$ and $(\beta, \mu)$ are of descent (resp.\@ of finite descent).

\item Let $(\alpha, \mu): (I, S) \rightarrow (J, T)$ be a morphism of diagrams of rings.
If for every $j \in J$ the morphism $(\alpha_j, \mu_j): (j \times_{/J} I, \pr_2^*S) \rightarrow (j, T_j)$ is of descent (resp.\@ of finite descent) then $(\alpha, \mu)$ is of descent (resp.\@ of finite descent).  

 If $\alpha: I \rightarrow J$ is a Grothendieck fibration then the statement holds with the slice category $j \times_{/J} I$ replaced by the fibered product $j \times_{J} I$. 

 \item Let $\alpha: I \rightarrow J$ be a morphism of diagrams and let $(\alpha, \id): (I, \alpha^* S) \rightarrow (J, S)$ be a morphism of diagrams in $\mathcal{S}$ of pure diagram type. If $(I \times_{/J} j, S_k) \rightarrow (\cdot, S_k)$ is of descent (resp.\@ of finite descent) for all $j, k \in J$ then 
 $(\alpha, \id)$ is of descent (resp.\@ of finite descent).

 \item Let $S \in \mathcal{S}$ be an object and $I$ a diagram. Denote by $(I, S)$ the corresponding constant diagram. There is an equivalence of categories 
\[ \mathcal{D}(I, S)^{\cocart} \rightarrow \mathcal{D}(I[\Mor(I)^{-1}], S)^{\cocart} \]
(resp.\@ decorated with $\mathrm{f}$)
where $I[\Mor(I)^{-1}]$ is the universal groupoid to which $I$ maps. $I[\Mor(I)^{-1}]$ is equivalent to the small category whose set of objects is $\pi_0(I)$ and whose morphism sets are
\[ \Hom(\xi, \xi') = \begin{cases} \pi_1(I, \xi) & \text{if } \xi=\xi', \\ \emptyset & \text{otherwise}. \end{cases} \] 

In particular $(I, S) \rightarrow (\cdot, S)$ is of descent (resp.\@ of finite descent) if $\pi_0(I)=\pi_1(I)=\{\cdot\}$.

 \item Let $\Delta$ be the simplex category, and let $\Delta^\circ$ be the injective simplex category. Let $(\Delta, S_\bullet)$, resp.\@ $(\Delta^{\circ}, S_\bullet)$ be
 a cosimplicial, resp.\@ a cosemisimplicial object in $\mathcal{S}$. Then the category $\mathcal{D}(\Delta, S_\bullet)^{\cocart}$, resp.\@ $\mathcal{D}(\Delta^\circ, S_\bullet)^{\cocart}$, is equivalent to the category of classical descent data, whose objects are an object $M$ in $\mathcal{D}(S_0)$ together with an isomorphism in $\mathcal{D}(S_1)$
 \[ \rho: (\delta_1^0)^* M \rightarrow (\delta_1^1)^* M \]
 such that the following equality of morphisms in $\mathcal{D}(S_2)$ holds true:
 \[ (\delta_2^0)^* \rho \ \circ \ (\delta_2^2)^* \rho = (\delta_2^1)^* \rho.    \]
Here $\delta_k^i$  is the strictly increasing map $\{0, \dots, k-1\} \rightarrow \{0,\dots,k\}$ omitting $i$.  
\end{enumerate}
\end{PROP}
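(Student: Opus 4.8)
The plan is to produce mutually quasi-inverse functors between $\mathcal{D}(\Delta, S_\bullet)^{\cocart}$ and the category $\mathbf{DD}$ of classical descent data, and then to note that the construction uses only injective simplicial maps, so it applies verbatim to $\Delta^\circ$. Throughout write $v^n_i\colon [0]\to[n]$ for the map $0\mapsto i$, and recall that on the fibres the diagram pull-back $(\delta^i_k)^*$ is the push-forward $S(\delta^i_k)_\bullet$ (by the formula $((\alpha,\mu)^*M)_i=\mu(i)_\bullet M_{\alpha(i)}$). By definition of coCartesianity a lift $M$ is coCartesian precisely when, for every $\rho\colon[m]\to[n]$, the adjunction morphism $S(\rho)_\bullet M_m\to M_n$ is an isomorphism; in particular each $M_n$ is canonically identified with $S(v^n_0)_\bullet M_0$.

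First the easy direction $\Phi\colon \mathcal{D}(\Delta, S_\bullet)^{\cocart}\to\mathbf{DD}$. Given coCartesian $M$, set $\Phi(M):=(M_0,\rho)$ with $M_0=M([0])$, and let $\rho\colon S(\delta_1^0)_\bullet M_0\to S(\delta_1^1)_\bullet M_0$ be the composite of the coCartesian comparison isomorphism $S(\delta_1^0)_\bullet M_0\iso M_1$ with the inverse of $S(\delta_1^1)_\bullet M_0\iso M_1$; on morphisms $\Phi$ takes a morphism of lifts to its component at $[0]$. The cocycle identity $(\delta_2^0)^*\rho\circ(\delta_2^2)^*\rho=(\delta_2^1)^*\rho$ is then forced: pushing $\rho$ forward along the three edges of $[2]$ and using functoriality of $M$ together with the cosimplicial identities $\delta_2^j\delta_1^i=\delta_2^{i'}\delta_1^{j'}$, both sides become the canonical comparison between the vertex-$0$ and vertex-$2$ push-forwards of $M_0$ over $S_2$, and equality expresses that this comparison does not depend on routing through vertex $1$.

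Conversely I construct $\Psi\colon\mathbf{DD}\to\mathcal{D}(\Delta,S_\bullet)^{\cocart}$. For an edge $e\colon[1]\to[n]$ with $e(0)=a\le b=e(1)$ we have $e\delta_1^0=v^n_b$ and $e\delta_1^1=v^n_a$, so $S(e)_\bullet\rho$ is an isomorphism $S(v^n_b)_\bullet M\iso S(v^n_a)_\bullet M$; composing the adjacent ones gives, for $i\le j$, a comparison $\rho^n_{j,i}\colon S(v^n_j)_\bullet M\iso S(v^n_i)_\bullet M$. Put $\widetilde M_n:=S(v^n_0)_\bullet M$, and for $\phi\colon[m]\to[n]$ let $\widetilde M(\phi)\colon\widetilde M_m\to\widetilde M_n$ be the coCartesian arrow whose adjoint is $S(\phi)_\bullet\widetilde M_m=S(\phi v^m_0)_\bullet M=S(v^n_{\phi(0)})_\bullet M\xrightarrow{\ \rho^n_{\phi(0),0}\ }S(v^n_0)_\bullet M=\widetilde M_n$. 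Each $\widetilde M(\phi)$ is coCartesian by fiat, so the only thing to check is functoriality $\widetilde M(\psi\phi)=\widetilde M(\psi)\widetilde M(\phi)$ and preservation of identities. \emph{This is the crux and the main obstacle}: it reduces to the transitivity $\rho^n_{\psi\phi(0),0}=\rho^n_{\psi(0),0}\circ S(\psi)_\bullet\rho^m_{\phi(0),0}$ and the naturality of the $\rho^n_{j,i}$ in monotone reindexing, and both follow, by an induction on $j-i$ that inserts a single intermediate vertex at each step, from the one triangle relation on $[2]$, i.e.\ from the cocycle condition. Degenerate edges ($a=b$) contribute identities because $\delta_1^0\sigma_0=\delta_1^1\sigma_0$, so codegeneracies require no separate argument and $\widetilde M$ is automatically a full cosimplicial object.

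Finally $\Phi$ and $\Psi$ are mutually quasi-inverse: $\Phi\Psi$ is canonically the identity (since $\widetilde M_0=M$ and $\rho^1_{1,0}=\rho$, $\rho^1_{0,0}=\id$ recover exactly $\rho$), while the coCartesian comparison isomorphisms $S(v^n_0)_\bullet M_0\iso M_n$ assemble into a natural isomorphism $\Psi\Phi(M)\iso M$, naturality being precisely the coCartesianity of the transition maps of $M$. Because the finite subcategory $\mathcal{D}^{\mathrm f}\to\mathcal{S}$ is opfibered, every push-forward $S(\rho)_\bullet$ preserves finiteness, so $\Phi$ and $\Psi$ restrict to the $\mathrm f$-decorated categories, yielding the finite statement. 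For the cosemisimplicial case one uses the same $\Phi$ (its definition involves only $[0],[1],[2]$ and their injective maps, all in $\Delta^\circ$) and restricts $\Psi$ along $\Delta^\circ\hookrightarrow\Delta$; since none of the verifications above used codegeneracies, they carry over unchanged.
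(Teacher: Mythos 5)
You address only assertion 6 of the Proposition; assertions 1--5 (the 2-out-of-3 property, descent via adjunctions of diagrams, the two point-wise criteria with their unit/counit and coCartesianity-preservation arguments, and the groupoidification statement) are not touched at all, and it is precisely 3 and 4 that carry the technical weight later in the paper (Lemma~\ref{LEMMAREFINE}, Lemma~\ref{LEMMAPROJSYS}, the Main Theorem). So as a proof of the stated Proposition the attempt is incomplete. For assertion 6 itself your route is genuinely different from the paper's: the paper reduces to the inclusions $\Delta^\circ_{\le 3}\hookrightarrow\Delta^\circ$ and $\Delta^\circ\hookrightarrow\Delta$ and then invokes assertions 4 and 5 together with connectivity and simple connectivity (resp.\@ contractibility) of the relevant slice categories, whereas you build a quasi-inverse by hand. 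That is a legitimate and more elementary alternative, and its skeleton --- the unit-step isomorphisms $\rho^n_{j,i}$, transitivity obtained by pushing the cocycle condition forward along $2$-simplices, and comparison of adjoints of coCartesian arrows --- is sound, including the verification that $\Phi$ and $\Psi$ are mutually quasi-inverse.

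There is, however, one step in the construction of $\Psi$ that is wrong as justified. You dispose of degenerate edges by citing the identity $\delta_1^0\sigma_0=\delta_1^1\sigma_0$; this identity is false ($\delta_1^0\sigma_0$ and $\delta_1^1\sigma_0$ are the constant maps $[1]\rightarrow[1]$ with values $1$ and $0$ respectively), and the correct identity $\sigma_0\delta_1^0=\sigma_0\delta_1^1=\id_{[0]}$ only shows that $S(\sigma_0)_\bullet\rho$ is an endomorphism of $M$ with equal source and target, not that it equals the identity. This point cannot be waved away: your naturality claim $S(\psi)_\bullet\rho^n_{j,i}=\rho^p_{\psi(j),\psi(i)}$ for non-injective $\psi$ --- which is exactly what functoriality of $\widetilde M$ over the codegeneracies requires, e.g.\@ for $\psi\colon[n]\rightarrow[0]$ --- needs the push-forward of $\rho$ along a degenerate edge to be the identity. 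The gap is fillable: push the cocycle condition forward along $S(\tau)$ for $\tau\colon[2]\rightarrow[n]$ the simplex collapsing onto a single vertex $j$; since $\tau\delta_2^0=\tau\delta_2^1=\tau\delta_2^2$ is the degenerate edge at $j$, the cocycle identity becomes $e\circ e=e$ for the isomorphism $e=S(v^n_j\sigma_0)_\bullet\rho$ of $S(v^n_j)_\bullet M$, whence $e=\id$. With this inserted your argument for assertion 6 goes through (for $\Delta^\circ$ the issue never arises, since all maps there are injective), but assertions 1--5 still need proofs.
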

\begin{proof}
1.\@ is clear.

2.\@ follows from the fact that if $\rho: (\alpha, \mu) \rightarrow (\beta, \nu)$ is a 2-morphism between 1-morphisms $(I, S) \rightarrow (J, T)$ and $M \in \mathcal{D}(J,T)$ is a coCartesian object, then
the morphism 
\[ \rho^*: (\alpha, \mu)^* M \rightarrow (\beta, \nu)^* M  \]
is an isomorphism. 

3. We start by showing that both unit and counit 
\[ (\alpha, \mu)^* (\alpha, \mu)_* E \rightarrow E \qquad  F \rightarrow (\alpha, \mu)_* (\alpha, \mu)^* F \]
are isomorphisms when restricted to the subcategories of coCartesian objects. 
Let $j: (\cdot, T_j) \rightarrow (J, T)$ be the embedding. We have to check that 
\begin{equation}\label{eq1}
 j^*  E \rightarrow j^* (\alpha, \mu)_* (\alpha, \mu)^* E 
 \end{equation}
is an isomorphism for all $j$. 

Consider the 2-commutative diagram:
\[ \xymatrix{
(j \times_{/J} I, \iota_j^*S)  \ar[r]^-{\iota_j} \ar[d]^{p_j} \ar@{}[rd]|\Nearrow & (I, S)  \ar[d]^{(\alpha, \mu)} \\
(j,T_j) \ar[r] & (J, T)
} \]

By the explicit point-wise formula for $ (\alpha, \mu)_*$, the morphism (\ref{eq1}) is the same as
\[    j^* E \rightarrow p_{j,*} \iota_j^* (\alpha, \mu)^* E_j. \] 
The morphism induced by the 2-morphism in the diagram $\iota_j^* (\alpha, \mu)^* E \rightarrow  p_j^* j^* E$
is an isomorphism on coCartesian objects by definition. Since the unit $\id \rightarrow p_{j,*} p_j^*$ is an isomorphism by assumption, we are done.

We now show that $(\alpha, \mu)_*$ preserves coCartesian objects.
Let $\rho: j_1 \rightarrow j_2$ be a morphism in $J$. It induces a map of fibers (purely of diagram type) $\overline{\rho}: (j_2 \times_{/J} I, \pr_2^*S) \rightarrow (j_1 \times_{/J} I, \pr_2^*S)$.
We have to show that 
\[ S(\rho)^* ((\alpha, \mu)_*M)_{j_1} \rightarrow ((\alpha, \mu)_*M)_{j_2}  \]
is an isomorphism. This can be checked after pull-back along  $p_2: (j_2 \times_{/J} I, \pr_2^*S) \rightarrow (j_2, T_{j_2})$ because this induces an equivalence of the categories of coCartesian objects by assumption. Since $S(\rho) p_2 = p_1 \overline{\rho}$ we get the morphism
\[ \overline{\rho}^* p_1^* ((\alpha, \mu)_*M)_{j_1} \rightarrow p_2^* ((\alpha, \mu)_*M)_{j_2}  \]
which is the same as
\[ \overline{\rho}^* \iota_1^* M \rightarrow \iota_2^* M.  \]
Since $\iota_2 = \iota_1 \circ \overline{\rho}$, this is an isomorphism.

To see that the counit is an isomorphism on coCartesian objects, we have to see that 
\begin{equation}\label{eq2}
 \iota_j^*  (\alpha, \mu)^* (\alpha, \mu)_* E \rightarrow \iota_j^* E 
 \end{equation}
is an isomorphism for all $j$. 
Since $(\alpha, \mu)_*$ preserves coCartesian objects, this is again the morphism
\[
p_j^* j^* (\alpha, \mu)_* E \rightarrow \iota^*   E 
\]
and hence the morphism induced by the counit 
\[
p_j^*p_{j,*} \iota^* E \rightarrow \iota^*   E.
\]
This is an isomorphism on coCartesian objects by assumption.

Proof of the additional statement: If $\alpha$ is a Grothendieck fibration, we have an adjunction
\[ \xymatrix{ (j \times_{J} I, \iota_j^*S) \ar@/^3pt/[r]^{\iota_j} & (j \times_{/J} I, \pr_2^*S) \ar@/^3pt/[l]^{\kappa_j} } \]
with $\kappa_j \iota_j = \id$ and such that there is a 2-morphism $\iota_j \kappa_j \Rightarrow  \id$.
Hence by 2., these morphisms are of descent (resp. of descent for finitely generated modules). Hence we may replace $j \times_{/J} I$ by $j \times_J I$ in the statement.

4. 
We will show again that both unit and counit 
\[ (\alpha, \mu)_! (\alpha, \mu)^* E \rightarrow E \qquad  F \rightarrow (\alpha, \mu)^* (\alpha, \mu)_! F \]
are isomorphisms when restricted to the subcategories of coCartesian objects. 
Let $j: (\cdot, T_j) \rightarrow (J, T)$ be the embedding. We have to see that 
\begin{equation}\label{eq3}
 j^*  (\alpha, \mu)_! (\alpha, \mu)^* E \rightarrow j^* E 
 \end{equation}
is an isomorphism for all $j$. 

Consider the 2-commutative diagram:
\[ \xymatrix{
(I \times_{/J} j, T_j)  \ar[r]^-\iota \ar[d]^{p_j} \ar@{}[rd]|\Swarrow & (I, S)  \ar[d]^{(\alpha, \mu)} \\
(j,T_j) \ar[r] & (J, T)
} \]

By the explicit point-wise formula for $ (\alpha, \mu)_!$, the morphism (\ref{eq3}) is the same as
\[    p_{j,!} \iota^* (\alpha, \mu)^* E \rightarrow  j^* E_j. \] 
The morphism induced by the 2-morphism in the diagram $\iota^* (\alpha, \mu)^* E \rightarrow  p_j^* j^* E$
is an isomorphism on coCartesian objects by definition. Since the counit $p_{j,!} p_j^* \rightarrow \id$ is an isomorphism by assumption, we are done.

We now show that $(\alpha, \mu)_!$ preserves coCartesian objects.
Let $\rho: j_1 \rightarrow j_2$ be a morphism in $J$. 
We have to show that 
\[ S(\rho)^* ((\alpha, \mu)_!M)_{j_1} \rightarrow ((\alpha, \mu)_!M)_{j_2}  \]
is an isomorphism. 
After inserting the point-wise formula and denoting $p_2': (I \times_{/J} j_1, T_{j_2}) \rightarrow (\cdot, T_{j_2})$, $\iota_2': (I \times_{/J} j_1, T_{j_2}) \rightarrow (I, \alpha^*T)$
we get: 
\begin{equation} \label{eqp2}
p_{2,!}'  (\iota_2')^* M \rightarrow p_{2,!} \iota_2^* M.  
\end{equation}
This is the morphism induced by the counit $\widetilde{\rho}_! \widetilde{\rho}^*$ for the morphism $\widetilde{\rho}: (I \times_{/J} j_1, T_{j_2}) \rightarrow (I \times_{/J} j_2, T_{j_2})$ (composition with $\rho$). 
Now observe that $p_{2,!}'$, resp. $p_{2,!}$, by assumption, can be computed on coCartesian elements just by evaluation at any element of $I \times_{/J} j_1$ resp. $I \times_{/J} j_2$.
Therefore (\ref{eqp2}) is an isomorphism.

5.\@ is obvious. 

6.\@ 
We have to show that the inclusion $\iota: (\Delta^{\circ}_{\le 3}, S_\bullet) \rightarrow (\Delta^{\circ}, S_\bullet)$ is of descent (resp.\@ of finite descent), the category $\mathcal{D}(\Delta^{\circ}_{\le 3}, S_\bullet)^{\cocart}$ being clearly just the category of classical descent data. 
By 4.\@ and 5.\@ this amounts to showing that 
$\Delta^{\circ}_{\le 3} \times_{/\Delta^{\circ}} \Delta_n$ is connected and $\pi_1(\Delta^{\circ}_{\le 3} \times_{/\Delta^{\circ}} \Delta_n)$ is trivial. 
This is well-known. Note that it is essential to take 3 terms of $\Delta^{\circ}$ here. For example $\pi_1(\Delta^{\circ}_{\le 2} \times_{/\Delta^{\circ}} \Delta_3) \cong \Z$. 
The same holds with $\Delta^{\circ}$ replaced by $\Delta$ because there is an adjunction 
\[ \xymatrix{ \Delta^{\circ}_{\le m} \times_{/\Delta^{\circ}} \Delta_n \ar@/^2pt/[r] & \ar@/^2pt/[l] \Delta_{\le m} \times_{/\Delta} \Delta_n  } \]
and the morphism
\[ (\Delta, S_\bullet)  \rightarrow (\Delta^{\circ}, S_\bullet) \]
is of descent (resp.\@ of finite descent). To prove the latter assertion, by 4.\@ it suffices to show that $\Delta^\circ \times_{/\Delta} \Delta_n$ is contractible. 
To see this, consider the projection $p: \Delta^\circ \times_{/\Delta} \Delta_n \rightarrow \cdot$. It has a section $s$ given by mapping $\cdot$ to $\id_{\Delta_n}$. 
We construct a morphism
\[ \xi: \Delta^\circ \times_{/\Delta} \Delta_n \rightarrow \Delta^\circ \times_{/\Delta} \Delta_n \]
mapping $\alpha: \Delta_k \rightarrow \Delta_n$ to $\alpha': \Delta_{k+1} \rightarrow \Delta_n$ given by 
\[ \alpha'(i) = \begin{cases} \alpha(i) & i<k, \\ n-1 & i=k, \end{cases} \]
and mapping an injective morphism $\beta: \Delta_k \rightarrow \Delta_{k'}$ such that
\[ \xymatrix{ \Delta_k \ar[rr]^{\beta}  \ar[rd] && \Delta_{k'}  \ar[ld] \\ & \Delta_n }\]
commutes to the (still injective) morphism $\beta': \Delta_{k+1} \rightarrow \Delta_{k'+1}$ given by
\[ \beta'(i) = \begin{cases} \beta(i) & i<k, \\ k' & i=k. \end{cases} \]
We have $p \circ s = \id_{\{\cdot\}}$ and here are obvious 2-morphisms
\[ s \circ p \Rightarrow \xi \qquad \id_{\Delta^\circ \times_{/\Delta} \Delta_n} \Rightarrow \xi  \]
showing that $\Delta^\circ \times_{/\Delta} \Delta_n$ is contractible, or, what matters here, that $(\Delta^\circ \times_{/\Delta} \Delta_n, S) \rightarrow (\cdot, S)$ is of descent (resp.\@ finite descent) for any $S \in \mathcal{S}$.  
\end{proof}

We need a refinement of Proposition \ref{PROPDESCENT} 3./4.\@ which also is specific to the situation of fibered categories and will not hold in any context of
cohomological descent. Call an object $j \in J$ {\bf initial} if no morphism $j' \rightarrow j$ exists with $j \not= j'$. 
\begin{LEMMA}\label{LEMMAREFINE}
Let again $\mathcal{D} \rightarrow \mathcal{S}$ be a bifibered category with choice of a full subcategory of finite objects $\mathcal{D}^{\mathrm{f}}$ as above.
Let $(\alpha, \mu): (I, S) \rightarrow (J, T)$ be a morphism of diagrams in $\mathcal{S}$. Assume that for any object $j$ there is a morphism $k \rightarrow j$ from an initial object $k$.

Then $(\alpha, \mu)$ is of descent (resp.\@ of finite descent)
if $p_j: (j \times_{/J} I, \pr_2^* S) \rightarrow (j, T_j)$ is such that $p_j^*$ is fully-faithful for any $j \in J$ and such that
$p_j$ is of descent (resp.\@ of finite descent) for any initial object $j$.
If $\alpha$ is a Grothendieck fibration then the same holds with $j \times_{/J} I$ replaced by $j \times_J I$. 
\end{LEMMA}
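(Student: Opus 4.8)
The plan is to show that $(\alpha,\mu)^*$ is fully faithful and essentially surjective on coCartesian objects, running the finite case in parallel. Throughout I would work pointwise over $J$ using the $2$-commutative squares relating $p_j$ and $\iota_j$ from the proof of Proposition~\ref{PROPDESCENT}.3, together with the fact that the comparison $2$-morphism $\iota_j^*(\alpha,\mu)^* E \to p_j^* j^* E$ is an isomorphism on coCartesian objects (Proposition~\ref{PROPDESCENT}.2).

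First, full faithfulness. Evaluating the unit $F \to (\alpha,\mu)_*(\alpha,\mu)^* F$ at an object $j$ and inserting the pointwise formula for $(\alpha,\mu)_*$, the comparison isomorphism above rewrites it, for coCartesian $F$, as the unit $j^* F \to p_{j,*} p_j^*(j^* F)$ of the adjunction $p_j^* \dashv p_{j,*}$. This is an isomorphism precisely because $p_j^*$ is fully faithful, and since this holds for every $j$ the unit of $(\alpha,\mu)$ is an isomorphism on all of $\mathcal{D}(J,T)^{\cocart}$; hence $(\alpha,\mu)^*$ is fully faithful there. This step uses only the full faithfulness of $p_j^*$: it is the first half of the proof of Proposition~\ref{PROPDESCENT}.3 with ``equivalence'' weakened to ``fully faithful''.

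The crux, where both hypotheses enter, is essential surjectivity. Given a coCartesian $M$, set $F := (\alpha,\mu)_* M$, so $F_j = p_{j,*}\iota_j^* M$. The hard part will be showing $F$ is coCartesian \emph{without} knowing that $p_j^*$ is an equivalence for general $j$; I would reduce to transition maps out of initial objects. Fix $j$, choose an initial $k$ with a morphism $\lambda\colon k \to j$ (which exists by hypothesis), and let $\Lambda\colon j\times_{/J}I \to k\times_{/J}I$ be the induced functor, so $\iota_j = \iota_k \circ \Lambda$ and $p_k\Lambda = \ell\, p_j$ with $\ell^* = T(\lambda)_\bullet$. Since $p_k$ is of descent, the counit $p_k^* F_k \to \iota_k^* M$ is an isomorphism; applying $\Lambda^*$ and using $\Lambda^* p_k^* F_k \cong p_j^*(T(\lambda)_\bullet F_k)$ gives $\iota_j^* M \cong p_j^*(T(\lambda)_\bullet F_k)$. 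Applying $p_{j,*}$ and invoking that the unit of $p_j^*\dashv p_{j,*}$ is an isomorphism (full faithfulness of $p_j^*$) yields
\[ F_j = p_{j,*}\iota_j^* M \cong p_{j,*}p_j^*(T(\lambda)_\bullet F_k) \cong T(\lambda)_\bullet F_k, \]
and one checks this isomorphism is the structure map of $F$ along $\lambda$. Thus every transition map of $F$ out of an initial object is coCartesian; for an arbitrary $\rho\colon j_1 \to j_2$ I pick an initial $k \to j_1$ and apply the two-out-of-three property of coCartesian morphisms to $k \to j_1 \to j_2$, concluding that $F$ is coCartesian.

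Finally I would check the counit $c\colon (\alpha,\mu)^* F \to M$ is an isomorphism. As in Proposition~\ref{PROPDESCENT}.3, once $F$ is coCartesian, $\iota_j^* c$ is the counit $p_j^* p_{j,*}\iota_j^* M \to \iota_j^* M$ of $p_j^*\dashv p_{j,*}$, which is an isomorphism for initial $j$ because $p_j$ is of descent. Since every $i \in I$ receives a morphism from an initial $k$ into $\alpha(i)$, the object $i$ lies in the image of some $\iota_k$ with $k$ initial, so the isomorphisms $\iota_k^* c$ force $c$ to be an isomorphism at every $i$. Together with full faithfulness this gives the equivalence. In the finite case full faithfulness is inherited from the full subcategory, and it only remains to see $F$ is pointwise finite: at initial $k$ the object $F_k$ is finite because $p_k$ is of finite descent, and the isomorphism $F_j \cong T(\lambda)_\bullet F_k$ propagates finiteness to all $j$ since $\mathcal{D}^{\mathrm{f}}$ is opfibered. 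The variant for a Grothendieck fibration $\alpha$ follows by the same argument after replacing $j\times_{/J}I$ by $j\times_J I$, using the adjunction of Proposition~\ref{PROPDESCENT}.3 to transport descent and full faithfulness between the two slices.
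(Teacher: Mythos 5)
Your proof is correct and uses exactly the two hypotheses the way the paper does (full faithfulness of $p_j^*$ for all $j$, descent only at initial $j$, combined through the $2$-commutative slice squares), but it organizes essential surjectivity differently. The paper builds a quasi-inverse by hand: it \emph{defines} $N(j) := T(\alpha_j)_\bullet\, p_{k,*}\iota_k^* M$ by pushing forward the value at a chosen initial $k \to j$, and then constructs the transition maps of $N$ after applying the fully faithful $p_{j_2}^*$, leaving the functoriality check to the reader. You instead take the canonical candidate $F = (\alpha,\mu)_* M$, prove the isomorphism $F_j \cong T(\lambda)_\bullet F_k$ (which shows your $F$ agrees with the paper's $N$), deduce coCartesianity of $F$ via left cancellation of coCartesian morphisms, and then verify the counit at every $i$ using that $\alpha(i)$ receives a map from an initial object. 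What your route buys is that functoriality is automatic ($F$ is already a functorial construction) and the quasi-inverse is identified as the right adjoint; what it costs is the extra verification that your composite isomorphism really is the structure map of $F$ along $\lambda$ (needed before you can invoke cancellation) --- a routine unit/counit triangle computation, at the same level of hand-waving as the paper's ``one checks that this association is functorial.'' Your treatment of the finite case (finiteness of $F_k$ from finite descent at $k$, propagated by the opfibered structure of $\mathcal{D}^{\mathrm{f}}$) and of the Grothendieck-fibration variant via the adjunction between the two slices matches the paper's intent. One small caveat: in both your argument and the paper's, ``$p_j^*$ fully faithful'' must be read as ``the unit $\id \to p_{j,*}p_j^*$ is an isomorphism (on the relevant objects)'' --- this is how the hypothesis is verified in Proposition~\ref{BASICFORMALDESCENT}, 2, and it is the form you actually use when you apply $p_{j,*}$ and invoke the unit.
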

\begin{proof}
By the proof of Proposition \ref{PROPDESCENT}, 3. $(\alpha, \mu)^*$ is fully-faithful because all $p_j^*$ are fully-faithful. 
We show by direct construction that $(\alpha, \mu)^*$ is essentially surjective.
Let $M$ be an $(I, S)$-module and $j \in J$ an object. Choose a morphism $\alpha_j: k \rightarrow j$ such that $k$ is initial, which is the identity if $j$ is already initial. 
Define 
\[ N(j):=(p_{k,*} \iota_k^* M) \otimes_{T(k)} T(j) = (\cdot, T(\alpha_j))^*(p_{k,*} \iota_k^* M). \] 
Note that $(p_k)_*$ is an inverse to the equivalence $(p_k)^*$. 
For a morphism $\nu: j_1 \rightarrow j_2$ we must define a morphism  $N(j_1) \otimes_{T(j_1)} T(j_2) \rightarrow N(j_2)$ or, in other words
$(\cdot, T(\nu))^* N(j_1) \rightarrow N(j_2)$. 
We have the standard 2-commutative diagram
\[ \xymatrix{
(j \times_{/J} I, \iota_j^*S)  \ar[r]^-{\iota_j} \ar[d]^{p_j} \ar@{}[rd]|\Nearrow & (I, S)  \ar[d]^{(\alpha, \mu)} \\
(j,T_j) \ar[r] & (J, T)
} \]

Denote $\mu_\nu$ the morphism induced by $\nu$: $(j_2 \times_{/J} I, \pr_2^*S) \rightarrow (j_1 \times_{/J} I, \pr_2^*S)$. 
We have
\[ p_{j_1} \circ \mu_\nu  = (\cdot, T(\nu)) \circ p_{j_2} \]
and
\[ \iota_{j_1} \circ \mu_\nu = \iota_{j_2}. \]
We give the morphism 
\[ (\cdot, T(\nu))^* (\cdot, T(\alpha_{j_2}))^* (p_{k_1,*} \iota_{k_1}^* M)  \rightarrow (\cdot, T(\alpha_{j_2}))^* (p_{k_2,*} \iota_{k_2}^* M).  \]

Because of fully-faithfulness we may do so after pulling back via $p_{j_2}^*$ and hence define $p_{j_2}^*$ applied to it as the following composition
\[ \xymatrix{
p_{j_2}^* (\cdot, T(\nu))^* (\cdot, T(\alpha_{j_1}))^* (p_{k_1,*} \iota_{k_1}^* M)  \ar[d]^\sim & p_{j_2}^* (\cdot, T(\alpha_{j_2}))^* (p_{k_2,*} \iota_{k_2}^* M) \ar[d]^\sim \\
\mu_{\nu \circ \alpha_{j_1}}^* p_{k_1}^*  p_{k_1,*} \iota_{k_1}^* M   \ar[d]^\sim &  \mu_{\alpha_{j_2}}^* p_{k_2}^* p_{k_2,*} \iota_{k_2}^* M \ar[d]^\sim \\
\mu_{\nu \circ \alpha_{j_1}}^*  \iota_{k_1}^* M   \ar[d]^\sim &  \mu_{\alpha_{j_2}}^* \iota_{k_2}^* M \ar[d]^\sim \\
 \iota_{j_2}^* M  \ar@{=}[r] &   \iota_{j_2}^* M 
} \]
using that $p_{k_1}^*$ and $p_{k_1,*}$ define an equivalence.
One checks that this association is functorial. 
\end{proof}

\section{Descent for modules}

\begin{LEMMA} \label{LEMMADESCBC}
Let $R \rightarrow R'$ be a ring homomorphism. 
For a morphism $(I, S) \rightarrow (J, T)$ of diagrams of $R$-algebras the property of being of descent for arbitrary modules implies that
$(I, S \otimes_R R') \rightarrow (J, T \otimes_R R')$ is of descent for arbitrary modules.

If $R\rightarrow R'$ is finite then  for a morphism $(I, F) \rightarrow (J, G)$ of diagrams of $R$-algebras the property of being of descent for finitely generated modules  implies that $(I, S \otimes_R R') \rightarrow (J, T \otimes_R R')$ is of descent for finitely generated modules.
\end{LEMMA}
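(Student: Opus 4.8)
The plan is to deduce both statements from one structural observation, avoiding any flatness hypothesis: for an arbitrary diagram $(K,U)$ of $R$-algebras the category $\mathcal{D}(K, U\otimes_R R')^{\cocart}$ depends on $\mathcal{D}(K,U)^{\cocart}$ only through its $R$-linear structure, namely as the category of ``$R'$-module objects'' therein, and the pull-back functors respect this description. Since each $(\alpha,\mu)^*$ is visibly $R$-linear, an $R$-linear equivalence automatically induces an equivalence on $R'$-module objects, which is precisely the base-changed pull-back.

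First I would record the computation that makes everything work. Let $N \in \mathcal{D}(K, U\otimes_R R')^{\cocart}$ and let $\rho\colon i \to j$ be a morphism of $K$ with structure map $U_i \to U_j$. Because $N_i$ already carries a $U_i\otimes_R R'$-module structure, one has the natural identification
\[ N_i \otimes_{U_i} U_j \;\cong\; N_i \otimes_{U_i\otimes_R R'} (U_j\otimes_R R') \;\cong\; N_j, \]
the first isomorphism since $U_j\otimes_R R' = (U_i\otimes_R R')\otimes_{U_i} U_j$ and $N_i$ is already $U_i\otimes_R R'$-linear, the second by coCartesianness of $N$. Hence restriction of scalars along the level-wise maps $U_i \to U_i\otimes_R R'$ defines a functor $\mathrm{Res}\colon \mathcal{D}(K, U\otimes_R R')^{\cocart} \to \mathcal{D}(K,U)^{\cocart}$, and the same computation shows that $\mathrm{Res}$ commutes, up to natural isomorphism, with the pull-back functors $(\alpha,\mu)^*$ and $(\alpha,\mu')^*$.

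Next I would set up the identification. As $\mathcal{D}(K,U)^{\cocart}$ is $R$-linear, let $\mathrm{Mod}_{R'}(\mathcal{D}(K,U)^{\cocart})$ denote the category of pairs $(X, R'\to \End X)$, where the map is an $R$-algebra homomorphism extending the canonical $R\to\End X$, with morphisms those commuting with the $R'$-action. Level-wise, a $U_i\otimes_R R'$-module is exactly a $U_i$-module equipped with a commuting $R'$-action agreeing on $R$, and a $U\otimes_R R'$-linear morphism is exactly a $U$-linear one commuting with $R'$; together with the matching of coCartesianness from the displayed computation this yields an equivalence
\[ \mathcal{D}(K, U\otimes_R R')^{\cocart} \;\simeq\; \mathrm{Mod}_{R'}\bigl(\mathcal{D}(K,U)^{\cocart}\bigr) \]
under which the forgetful functor corresponds to $\mathrm{Res}$. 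This identification is natural in $(K,U)$: since $(\alpha,\mu)^*$ is $R$-linear and commutes with $\mathrm{Res}$, it intertwines $(\alpha,\mu')^*$ with the functor $\mathrm{Mod}_{R'}((\alpha,\mu)^*)$ induced on $R'$-module objects.

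Finally I would conclude. An $R$-linear equivalence of $R$-linear categories induces an equivalence on the associated categories of $R'$-module objects (transporting $R'\to\End X$ through the isomorphism $\End X \cong \End FX$). Applying this to the equivalence $(\alpha,\mu)^*$ and using the identification of the previous step gives that $(\alpha,\mu')^*$ is an equivalence, proving the first assertion. For the second assertion one restricts throughout to finite objects; here the hypothesis that $R\to R'$ is \emph{finite} enters exactly once but essentially: it makes $U_i \to U_i\otimes_R R'$ finite, so that a $U_i\otimes_R R'$-module is finitely generated over $U_i\otimes_R R'$ if and only if it is finitely generated over $U_i$. Consequently $\mathrm{Res}$ preserves finiteness and the identification restricts to an equivalence $\mathcal{D}(K, U\otimes_R R')^{\mathrm{f},\cocart} \simeq \mathrm{Mod}_{R'}(\mathcal{D}(K,U)^{\mathrm{f},\cocart})$; the same transport of structure, now applied to the finite coCartesian equivalence $(\alpha,\mu)^*$, yields the finite descent statement. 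The only non-formal ingredients are the displayed isomorphism — which is where restriction of scalars fails to destroy coCartesianness — and, for the finite case, the matching of finiteness conditions over $U_i$ and $U_i\otimes_R R'$; the remainder is formal transport of structure along an $R$-linear equivalence. I expect this finiteness matching (and hence the precise role of the word ``finite'') to be the main point requiring care.
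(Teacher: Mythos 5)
Your proposal is correct and is essentially the paper's own argument: the paper's proof consists precisely of the one-line observation that $(I,S\otimes_R R')$-modules are $(I,S)$-modules equipped with an $R$-algebra map $R'\to\End_R(X)$, and your write-up fills in the details (matching of coCartesianness, compatibility with pull-back, and the use of finiteness of $R\to R'$ to match the finite-generation conditions) of exactly that identification.
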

\begin{proof}
The category of $(I, S \otimes_R R')$-modules is equivalent to the category of $(I, S)$-modules with $R'$-action, i.e.\@ to the category whose objects are pairs consisting of an object $X \in \cat{$(I, S)$-mod}$ and of a homomorphism of $R$-algebras $\rho: R' \rightarrow \End_R(X)$.  
\end{proof}

\begin{LEMMA}\label{LEMMADESCFINITE}
Let $(\alpha, \mu): (I, S) \rightarrow (J, T)$ be a morphism of diagrams of rings such that $I \times_{/J} j$ is a finite diagram for all $j$. 
If $(\alpha, \mu)^*$ is faithful then $(\alpha, \mu)^*M$ finitely generated implies $M$ finitely generated.
In particular ``of descent'' implies ``of descent for finitely generated modules''.
\end{LEMMA}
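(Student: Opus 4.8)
The plan is to deduce finiteness of $M$ from a single surjectivity statement: it suffices to produce a pointwise finitely generated $(J,T)$-module $N$ together with a morphism $\phi\colon N\to M$ such that $(\alpha,\mu)^*\phi$ is an epimorphism (surjective in each degree $i\in I$). Indeed, forming the cokernel $C=\coker(\phi)$ degreewise and using that $(\alpha,\mu)^*$ is computed by the right-exact functors $-\otimes_{T_{\alpha(i)}}S_i$, one gets $(\alpha,\mu)^*C=\coker((\alpha,\mu)^*\phi)=0$. Hence $\id_C$ and $0_C$ have the same (zero) image under $(\alpha,\mu)^*$, so faithfulness forces $\id_C=0_C$, i.e.\ $C=0$. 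Then $\phi$ is degreewise surjective and each $M_j=\phi_j(N_j)$ is a quotient of the finitely generated module $N_j$, whence $M$ is finite.

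So the real work is the construction of $(N,\phi)$, and this is where the finiteness of the slices $I\times_{/J}j$ enters. For every $i\in I$ choose finitely many elements $m^{(i)}_1,\dots,m^{(i)}_{k_i}\in M_{\alpha(i)}$ whose images generate the finitely generated $S_i$-module $M_{\alpha(i)}\otimes_{T_{\alpha(i)}}S_i$; this gives a $T_{\alpha(i)}$-linear map $T_{\alpha(i)}^{k_i}\to M_{\alpha(i)}$. Now set
\[ N_j:=\bigoplus_{(i,\nu)\in I\times_{/J}j} T_j^{k_i}, \]
which is a finite free $T_j$-module precisely because the slice $I\times_{/J}j$ is finite. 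A morphism $\rho\colon j\to j'$ acts by $(i,\nu)\mapsto(i,\rho\circ\nu)$ on index sets and by $T(\rho)$ on the summands, making $N$ a $(J,T)$-module. The map $\phi_j\colon N_j\to M_j$ sends the summand indexed by $(i,\nu\colon\alpha(i)\to j)$ to $M_j$ via $T_j^{k_i}=T_{\alpha(i)}^{k_i}\otimes_{T_{\alpha(i)}}T_j\to M_{\alpha(i)}\otimes_{T_{\alpha(i)}}T_j\to M_j$, the last arrow being the adjoint of $M(\nu)$; functoriality of $M$ makes $\phi$ a morphism of $(J,T)$-modules. Finally $(\alpha,\mu)^*\phi$ is surjective at each $i_0$ because $N_{\alpha(i_0)}$ contains the summand indexed by $(i_0,\id_{\alpha(i_0)})$, whose image already contains $m^{(i_0)}_1\otimes 1,\dots,m^{(i_0)}_{k_{i_0}}\otimes 1$, a generating set of $M_{\alpha(i_0)}\otimes_{T_{\alpha(i_0)}}S_{i_0}$.

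For the final assertion, if $(\alpha,\mu)$ is of descent then $(\alpha,\mu)^*\colon\mathcal{D}(J,T)^{\cocart}\to\mathcal{D}(I,S)^{\cocart}$ is an equivalence, hence fully faithful, and this full-faithfulness restricts to the finite subcategories; so it remains to check essential surjectivity, i.e.\ that a coCartesian $M$ with $(\alpha,\mu)^*M$ finite is itself finite. This is exactly the content of the first part, applied to the faithful functor $(\alpha,\mu)^*$.

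The step I expect to be the main obstacle is reconciling the two notions of faithfulness in this last reduction. An equivalence on coCartesian objects only yields faithfulness of $(\alpha,\mu)^*$ on coCartesian objects, whereas the cokernel $C$ produced above is a quotient of the (in general non-coCartesian) module $N$ and need not be coCartesian. One therefore has to run the cokernel argument with a coCartesian auxiliary object: replace $N$ by its coCartesianization $\widehat N$, so that $C=\coker(\widehat N\to M)$ is coCartesian and faithfulness applies, and then re-verify that $\widehat N$ is still pointwise finitely generated and that $(\alpha,\mu)^*(\widehat N\to M)$ stays surjective. Checking that coCartesianization preserves pointwise finiteness is the delicate point, and it is again governed by the finiteness of the slices $I\times_{/J}j$; in the simplest case $J=\{\cdot\}$ the two notions of faithfulness coincide and no such correction is needed.
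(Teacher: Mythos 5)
Your argument is correct and is, up to packaging, the paper's own proof: the paper fixes $j\in J$, takes the submodule $N(j)\subset M(j)$ generated by exactly your component-generators gathered over the finite slice $I\times_{/J}j$, and uses faithfulness --- applied to the nonzero map $M\to j_*(M(j)/N(j))$, whose pullback vanishes by construction --- to force $N(j)=M(j)$; your free cover $N$ and its cokernel are the same argument run globally instead of one $j$ at a time. The obstacle you flag at the end is not one you should try to repair by coCartesianization (such a reflection need not exist, and your cokernel $C$ would in any case not obviously stay under control): the hypothesis ``$(\alpha,\mu)^*$ is faithful'' is meant on the full module categories --- note that the paper's own proof likewise applies faithfulness to a morphism whose target $j_*(M(j)/N(j))$ is not coCartesian --- and in the only place the ``in particular'' clause is invoked (Proposition~\ref{BASICFORMALDESCENT}) one has $J=\{\cdot\}$, so every $(J,T)$-module is coCartesian and the unit $\id\to p_*p^*$ is shown there to be an isomorphism on arbitrary modules, which gives the needed faithfulness on everything.
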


\begin{proof}
This is similar to the statement that a module is finitely generated if it becomes finitely generated after a faithfully flat ring extension. 
Let $M$ be a coCartesian module over $(J, T)$ such that $(\alpha, \mu)^* M$ is finitely generated. Let $j \in J$.  For each $(i, \rho: \alpha(i) \rightarrow j) \in I \times_{/J} j$
we know that $M(\alpha(i)) \otimes_{T_{\alpha(i)}} S_i$ is a finitely generated $S_i$-module. 
Let $\{\xi_k^{i,\rho}\}_k$ be images in $M(j)$ of the (finitely many) $M(\alpha(i))$-components of those generators. 
We claim that the union over those finite sets for all objects in $I \times_{/J} j$ generates $M(j)$. For let $N(j)$ be the submodule generated by them, and
assume that $N(j)$ is different from $M(j)$. 
The non-zero morphism $j^*M \rightarrow M(j)/N(j)$ induces a non-zero morphism $M \rightarrow j_* (M(j)/N(j))$ and therefore a non-zero morphism 
$(\alpha, \mu)^* M \rightarrow (\alpha, \mu)^* j_*(M(j)/N(j))$ because $(\alpha, \mu)^*$ is faithful. For any $i$ consider the morphism
$i^* (\alpha, \mu)^* M \rightarrow i^* (\alpha, \mu)^* j_*M(j)/N(j)$ which is
\[ M(\alpha(i)) \otimes_{T(\alpha_i)} S(i)  \rightarrow (\alpha(i)^* (j_* M(j)/N(j))) \otimes_{T(\alpha_i)} S(i),  \]
or also
\[ M(\alpha(i)) \otimes_{T(\alpha_i)} S(i)  \rightarrow (\prod_{\alpha(i) \rightarrow j} M(j)/N(j))  \otimes_{T(\alpha_i)} S(i).   \]
This is the tensor product with $S(i)$ of the map induced by the canonical ones $M(\alpha(i)) \rightarrow M(j)$. 
By construction of $N(j)$ this map is zero, a contradiction. 
\end{proof}

\section{Descent for modules on ringed spaces}

\begin{LEMMA}\label{LEMMADESCLOCAL}
For the bifibered category 
\[ \cat{$X$-mod} \rightarrow \cat{$X$-ring} \]
we have that $(I, S) \rightarrow (J, T)$ is of descent (resp.\@ of finite descent) if for any open set $U \subset X$
there is a cover $U = \bigcup_i U_i$ such that $(I, S|_{U_i}) \rightarrow (J, T|_{U_i})$ is of descent (resp.\@ of finite descent) for the bifibered category
\[ \cat{$U_i$-mod} \rightarrow \cat{$U_i$-ring}. \]
\end{LEMMA}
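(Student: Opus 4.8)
The plan is to verify directly that $(\alpha,\mu)^*$ is an equivalence by checking full-faithfulness and essential surjectivity, exploiting that each of these is local on $X$ together with the crucial fact that pull-back commutes with restriction to opens: for a sheaf-theoretic tensor product one has $(M_{\alpha(i)} \otimes_{T_{\alpha(i)}} S_i)|_U = M_{\alpha(i)}|_U \otimes_{T_{\alpha(i)}|_U} S_i|_U$, so that $((\alpha,\mu)^* M)|_U = (\alpha,\mu)^*(M|_U)$ for the diagrams restricted to $U$.

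First I would treat full-faithfulness. For two coCartesian objects $M, N \in \mathcal{D}(J,T)^{\cocart}$ the assignment $U \mapsto \Hom_{(J,\,T|_U)}(M|_U, N|_U)$ is a sheaf on $X$ — a morphism of diagram-indexed module sheaves is a compatible family of sheaf morphisms, and such families glue — and likewise for the $(I,S)$-side; the functor $(\alpha,\mu)^*$ induces a morphism between these two sheaves. Applying the hypothesis to an arbitrary open $U$, this morphism becomes an isomorphism after restriction to the members of a cover of $U$, hence is an isomorphism of sheaves. Taking sections shows $(\alpha,\mu)^*$ is fully faithful over every open $U$; in particular it is fully faithful over $X$ and over every intersection of members of a cover, which is what the gluing below needs.

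For essential surjectivity, fix $N \in \mathcal{D}(I,S)^{\cocart}$ and apply the hypothesis to $U = X$ to obtain a cover $X = \bigcup_a U_a$ on which $(\alpha,\mu)^*$ is an equivalence. Choose $M_a \in \mathcal{D}(J, T|_{U_a})^{\cocart}$ together with an isomorphism $\theta_a: (\alpha,\mu)^* M_a \iso N|_{U_a}$. On each overlap $U_{ab}$ the pull-backs $(\alpha,\mu)^*(M_a|_{U_{ab}})$ and $(\alpha,\mu)^*(M_b|_{U_{ab}})$ are both identified with $N|_{U_{ab}}$ through $\theta_a$ and $\theta_b$; by full-faithfulness over $U_{ab}$ this identification comes from a unique isomorphism $\phi_{ab}: M_a|_{U_{ab}} \iso M_b|_{U_{ab}}$, and injectivity of $(\alpha,\mu)^*$ on morphisms over $U_{abc}$ forces the cocycle identity $\phi_{bc}\circ\phi_{ab} = \phi_{ac}$. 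I then glue: for each $j \in J$ the component sheaves $(M_a)_j$ glue along the $j$-components of the $\phi_{ab}$ to a sheaf $M_j$ of $T_j$-modules on $X$, and because each $\phi_{ab}$ is a morphism of $(J,T|_{U_{ab}})$-modules the $T_j$-actions and the transition maps attached to the morphisms of $J$ glue as well, producing a global object $M \in \mathcal{D}(J,T)$. CoCartesianness can be checked on stalks and is therefore local, so $M \in \mathcal{D}(J,T)^{\cocart}$; finally the $\theta_a$, compatible on overlaps by construction, glue to an isomorphism $(\alpha,\mu)^* M \cong N$.

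For the case of finite descent the same argument applies verbatim on the full subcategories of point-wise coherent objects: coherence is a local property, so it is inherited by the glued object $M$ and is the only additional condition to monitor, while $\mathcal{H}om$ between coherent sheaves is again a sheaf, leaving the full-faithfulness step untouched. I expect the gluing in the essential-surjectivity step to be the only genuine work: one must check that the cocycle isomorphisms $\phi_{ab}$ simultaneously respect every piece of structure — the module actions, the transition maps indexed by the morphisms of $J$, and coCartesianness — which is precisely what full-faithfulness over the double and triple overlaps guarantees.
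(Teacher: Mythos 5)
Your argument is correct and is exactly the ``obvious glueing argument'' that the paper's proof invokes in one line (the paper also sketches an alternative via hypercoverings and 2-out-of-3, which you do not need). The two points you rightly isolate --- that $(\alpha,\mu)^*$ commutes with restriction to opens, and that full-faithfulness over all opens (via the $\mathcal{H}om$-sheaf) supplies both the cocycle condition and the compatibility of the glued structure --- are precisely the content the paper leaves implicit.
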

\begin{proof}
This is an obvious glueing argument. Alternatively one could construct a commutative square of diagram-morphisms
\[ \xymatrix{
(I \times \Delta^\circ, S_\bullet) \ar[r] \ar[d] & (J \times \Delta^\circ, T_\bullet) \ar[d] \\
(I, S) \ar[r] & (J, T)
} \]
where the vertical morphisms consist point-wise in $I$ (resp.\@ $J$) of the restrictions of $S_i$ (resp. $T_j$) to a hypercovering of $X$ such
that the top horizontal morphism consists point-wise in $\Delta^\circ$ of a morphism of descent (resp.\@ of finite descent). 
By explicit construction one sees that the upper horizontal morphism is also of descent (resp.\@ of finite descent). The vertical morphisms
are then of descent by the definition of sheaf. This shows that also the lower horizontal morphism is of descent (resp.\@ of finite descent).
\end{proof}

\section{Descent and projective systems}

Let $S$ be a noetherian ring, $\mathfrak{a}$ an ideal of $S$
and consider the diagram $(\N^{\op}, S_\bullet)$ where $S_n = S/\mathfrak{a}^nS$ for every $n \in \N$.
\begin{LEMMA}
For an object  $M_\bullet \in \cat{$(\N^{\op}, S_\bullet)$-mod}$ the following assertions are equivalent
\begin{enumerate}
\item $M_\bullet \in \cat{$(\N^{\op}, S_\bullet)$-mod-f.g.-cocart}$.
\item $M_1$ is finitely generated and for each for each $k \le l$, the sequence 
\[ \xymatrix{ 0 \ar[r] & \mathfrak{a}^{k} M_l \ar[r] & M_l \ar[r] & M_k \ar[r] & 0  } \]
is exact. 
\end{enumerate}
\end{LEMMA}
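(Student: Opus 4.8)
The plan is to reduce both conditions to elementary statements about the transition maps $M_l \to M_k$ of the projective system, and then match them termwise. Recall that in the bifibered category $\cat{mod} \rightarrow \cat{ring}$ a morphism $M_l \to M_k$ lying over $S_l \to S_k$ is coCartesian precisely when the induced $S_k$-linear map $M_l \otimes_{S_l} S_k \to M_k$ is an isomorphism. For a morphism $l \to k$ of $\N^{\op}$, i.e.\ for $k \le l$, we have $S_k = S_l/\mathfrak{a}^k S_l$, so that $M_l \otimes_{S_l} S_k = M_l/\mathfrak{a}^k M_l$; moreover the transition map $M_l \to M_k$ annihilates $\mathfrak{a}^k M_l$ (because $\mathfrak{a}^k$ acts by zero on $M_k$) and the coCartesian comparison map is exactly the resulting map $M_l/\mathfrak{a}^k M_l \to M_k$.

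First I would record that this comparison map is an isomorphism if and only if the sequence $0 \to \mathfrak{a}^k M_l \to M_l \to M_k \to 0$ is exact. Thus ``$M_\bullet$ is coCartesian'' is, morphism by morphism, literally the family of exact sequences appearing in assertion 2 (indexed by all $k \le l$). This already disposes of the non-finiteness content of the equivalence.

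It then remains to show that, in the presence of these exact sequences, pointwise finite generation (the decoration in assertion 1) is equivalent to finite generation of $M_1$ alone. One implication is immediate. For the converse I would take $k=1$ in the exact sequence to get $M_n/\mathfrak{a}M_n \cong M_1$, so each $M_n/\mathfrak{a}M_n$ is finitely generated; since $M_n$ is a module over $S_n = S/\mathfrak{a}^n$ the ideal $\mathfrak{a}$ acts nilpotently, with $\mathfrak{a}^n M_n = 0$. A Nakayama-type argument then finishes: lifting finitely many generators of $M_n/\mathfrak{a}M_n$ to a submodule $N \subseteq M_n$ gives $M_n = N + \mathfrak{a}M_n$, whence $M_n/N = \mathfrak{a}(M_n/N) = \cdots = \mathfrak{a}^n(M_n/N) = 0$ and $M_n = N$ is finitely generated.

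Chaining the two steps yields $1 \Leftrightarrow 2$. The only step that is not pure bookkeeping about the fibration is the nilpotent Nakayama argument, and even that is routine, so I do not expect a genuine obstacle here. (Note that noetherianity of $S$ is not actually needed for this particular equivalence, though it is a standing hypothesis: it would guarantee that the subobjects $\mathfrak{a}^k M_l$ are again finitely generated, which is not what is asserted.)
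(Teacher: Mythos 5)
Your proposal is correct and follows essentially the same route as the paper: identify coCartesianity of $M_l \to M_k$ with the isomorphism $M_l/\mathfrak{a}^k M_l \cong M_k$ (equivalently, exactness of the displayed sequence), then deduce finite generation of each $M_l$ from that of $M_1$ via Nakayama for the nilpotent ideal $\mathfrak{a} \subset S/\mathfrak{a}^l S$. The only difference is cosmetic — you spell out the nilpotent Nakayama iteration that the paper merely cites.
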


\begin{proof}
The exact sequence
\[ \xymatrix{ 0 \ar[r] &  \mathfrak{a}^{k} S  \ar[r] & S \ar[r] & S/ \mathfrak{a}^k S \ar[r] & 0  } \]
tensored with $M_l$ yields the sequence
\[ \xymatrix{ 0 \ar[r] &  \mathfrak{a}^{k} M_l  \ar[r] & M_l \ar[r] & (S/ \mathfrak{a}^k S) \otimes_S M_l \ar[r] & 0  } \]
Hence coCartesianity of the diagram is equivalent to the exactness of the sequence above. 
It suffices to show that for a coCartesian diagram the condition that $M_1$  is finitely generated implies that $M_k$ is finitely generated. 
Consider the sequence of $S / \mathfrak{a}^{l} S$-modules
\[ \xymatrix{ 0 \ar[r] &  \mathfrak{a} M_l  \ar[r] & M_l \ar[r] & M_1 \ar[r] & 0  } \]
Since $\mathfrak{a}$ is nilpotent in $S / \mathfrak{a}^{l} S$, this implies that $M_l$ is finitely generated by Nakayama's lemma. 
\end{proof}

\begin{LEMMA}\label{LEMMAPROJSYS}
Let $R$ be a noetherian ring, $\mathfrak{a}$ an ideal and consider a diagram $(I, F)$ of $\mathfrak{a}$-adically complete and separated noetherian $R$-algebras. 
Let $(I \times \N^{\op}, F_\bullet)$ to be the diagram with value $F_n(i) := F(i) \otimes_R R/ \mathfrak{a}^n R$. Let $p: (I \times \N^{\op}, F_\bullet) \rightarrow (I, F)$ be the obvious morphism. 
\begin{enumerate}
\item $p^*$ and $p_*$ induce an equivalence 
\[ \cat{$(I \times \N^{\op}, F_\bullet)$-mod-f.g.-$I$-cocart} \leftrightarrow \cat{$(I, F)$-mod-f.g.}  \]
\item This equivalence restricts to an equivalence
\[ \cat{$(I \times \N^{\op}, F_\bullet)$-mod-f.g.-cocart} \leftrightarrow \cat{$(I, F)$-mod-f.g.-cocart}  \]
of the full subcategories of coCartesian modules. In other words the morphism $(I \times \N^{\op}, F_\bullet) \rightarrow (I, F)$ is of descent for finitely generated modules.
\end{enumerate}
\end{LEMMA}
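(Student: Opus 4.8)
The plan is to make the adjunction $(p^*, p_*)$ completely explicit on the relevant subcategories and then reduce everything to the classical fact that a finitely generated module over an $\mathfrak{a}$-adically complete noetherian ring is recovered from its system of reductions modulo powers of $\mathfrak{a}$. First I would compute the two functors. By the pointwise formula, $p^*$ sends an $(I,F)$-module $M$ to the system $(p^*M)_{(i,n)} = M(i)\otimes_{F(i)} F_n(i) = M(i)/\mathfrak{a}^n M(i)$; this is finitely generated and automatically $I$-coCartesian, since its $\N^{\op}$-transition maps $M(i)/\mathfrak{a}^n M(i)\to M(i)/\mathfrak{a}^m M(i)$ are the coCartesian ones. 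For $p_*$ I would observe that on $I$-coCartesian objects Kan's limit formula collapses: the subcategory $\{i\}\times\N^{\op}\hookrightarrow i\times_{/I}(I\times\N^{\op})$ sitting at the objects $(i,n,\id_i)$ is initial, because for each $(i',m,\nu)$ the relevant comma category is $\{n\ge m\}$, which is nonempty and connected. Hence $(p_*N)_i = \lim_n N_{(i,n)}$, the $\mathfrak{a}$-adic limit taken objectwise in $i$.

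For part (1) I would then invoke, for each fixed $i$, the classical completion equivalence for the noetherian ring $A := F(i)$, complete and separated for $\mathfrak{a}A$: the functor $M\mapsto (M/\mathfrak{a}^n M)_n$ is an equivalence from finitely generated $A$-modules onto the category of finitely generated coCartesian $(\N^{\op}, A/\mathfrak{a}^\bullet A)$-modules characterized by the preceding lemma, with quasi-inverse $(N_n)_n\mapsto \lim_n N_n$. The unit $M\to\lim_n M/\mathfrak{a}^n M$ is an isomorphism because a finitely generated module over a complete noetherian ring is separated and complete (Krull intersection together with completeness). The counit, namely that $\lim_n N_n$ is finitely generated and satisfies $(\lim_n N_n)/\mathfrak{a}^m(\lim_n N_n)\cong N_m$, is the substantive input: lift generators of $N_1$, use Nakayama over each nilpotent quotient $A/\mathfrak{a}^m A$ to see they generate every $N_m$, and pass to the limit using that the transition maps are surjective (Mittag--Leffler, Artin--Rees). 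These equivalences are natural in $i$, since a ring map $F(i)\to F(i')$ intertwines reduction and limit, so they assemble into the claimed equivalence, matching the $I$-coCartesian (otherwise unrestricted in $I$) systems with finitely generated $(I,F)$-modules.

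For part (2) I would check that under this equivalence the two remaining coCartesian conditions correspond. For a finitely generated $(I,F)$-module $M$ and a morphism $\sigma: i\to i'$ in $I$, the module $M$ is coCartesian at $\sigma$ iff the structure map $\phi: M(i)\otimes_{F(i)} F(i')\to M(i')$ is an isomorphism, whereas $p^*M$ is coCartesian at $\sigma$ in the $I$-direction iff $\phi\otimes_{F(i')}F_n(i')$ is an isomorphism for all $n$. The forward implication is immediate from right-exactness of base change. For the converse, both source and target of $\phi$ are finitely generated over the complete noetherian ring $F(i')$, and $\phi$ is an isomorphism modulo $\mathfrak{a}^n$ for all $n$; its cokernel then vanishes by Krull intersection and its kernel lies in $\bigcap_n \mathfrak{a}^n\bigl(M(i)\otimes_{F(i)} F(i')\bigr)=0$ by Artin--Rees, so $\phi$ is an isomorphism. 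Combined with part (1), this shows $p^*$ restricts to an equivalence of the fully coCartesian subcategories, which is precisely the assertion that $p$ is of descent for finitely generated modules.

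I expect the main obstacle to be the counit in part (1): the finiteness of $\lim_n N_n$ and the identification of its reductions with the $N_n$, since this is exactly where completeness and the noetherian hypothesis are genuinely used. The collapse of Kan's formula and the matching of the coCartesian conditions in part (2) are comparatively formal once the completion equivalence is established.
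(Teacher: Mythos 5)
Your proposal is correct, and for part (1) it is essentially the paper's argument: reduce to a single object of $I$ (the paper invokes Proposition~\ref{PROPDESCENT}, 4; you get the same reduction by observing that $\{i\}\times\N^{\op}$ is initial in the comma category, so that Kan's formula collapses to $(p_*N)_i=\lim_n N_{(i,n)}$) and then apply the classical completion equivalence for a complete noetherian ring, which the paper simply cites from EGA~I, 0, 7.2.9--7.2.10 and you re-prove by the standard Nakayama/Mittag--Leffler argument. For part (2) you take a genuinely different route. The paper shows that $p_*$ \emph{preserves} coCartesian objects: for $\nu\colon i\to j$ it must identify $(\lim_n M_n(j))\otimes_{F(j)}F(i)$ with $\lim_n\bigl(M(j)\otimes_{F(j)}F_n(i)\bigr)$, and it isolates the needed fact as a separate lemma --- tensoring with a finitely generated module over a complete noetherian ring commutes with limits of projective systems of modules over the quotients --- proved by comparing the two exact sequences obtained from a finite presentation. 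You instead show that $p^*$ \emph{reflects} coCartesianity on finitely generated modules: a map $\phi$ of finitely generated modules over the complete noetherian ring $F(i')$ that becomes an isomorphism modulo every $\mathfrak{a}^n$ is an isomorphism, by Krull intersection applied first to the cokernel and then (after surjectivity) to the kernel, using that $\mathfrak{a}$ lies in the Jacobson radical by completeness. Combined with part (1) (every finitely generated coCartesian object upstairs is $p^*$ of its limit), this yields the restricted equivalence just as well. Both arguments consume exactly the same hypotheses; the paper's auxiliary lemma is marginally more flexible (no finiteness on the $S_n$, and it is the shape of statement reused later in the paper), while your reflection argument is more self-contained and avoids the extra lemma. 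The only point worth making explicit in your write-up is the reduction of full coCartesianity of $p^*M$ to coCartesianity at the morphisms $(\sigma,\id_n)$, which uses that $p^*M$ is already coCartesian in the $\N^{\op}$-direction together with the composition/cancellation properties of coCartesian morphisms; this is routine but should be said.
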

\begin{proof}
1.\@ By Proposition~\ref{PROPDESCENT}, 4.\@ the statement can be checked point-wise. This reduces to the case $I = \cdot$. Then
the equivalence results from \cite[Chapitre 0, Proposition (7.2.9) and Corollaire (7.2.10)]{EGAI}.

2.\@ We will show that for a finitely generated coCartesian $(I \times \N, F_\bullet)$-module $M$, the module $p_* M$ is again coCartesian. 
Let $\nu: i \rightarrow j$ be a morphism in $I$. We have to show that the morphism
\[ (\lim_n M_n(j)) \otimes_{F(j)} F(i) \rightarrow \lim_n M_n(i) = \lim_n (M_n(j) \otimes_{F_n(j)} F_n(i)) \]
is an isomorphism. Using what is already proven, we may write this denoting $M(j):=  \lim_n M_n(j)$
\[ M(j) \otimes_{F(j)} F(i) \rightarrow \lim_n (M(j) \otimes_{F(j)} F_n(i)). \]

That this is an isomorphism follows from the following lemma. 
\end{proof}

\begin{LEMMA}
Let $R$ be a ring with ideal $\mathfrak{a}$.
Assume that $R$ is noetherian and $\mathfrak{a}$-adically complete and separated. Let $M$ be a f.g.\@ $R$-module, and let $S_n$ be a projective system of $R/\mathfrak{a}^n R$-modules (or algebras).
Then we have:
\[ M \otimes_R (\lim_n S_n) \cong \lim_n (M \otimes_{R} S_n).  \]
\end{LEMMA}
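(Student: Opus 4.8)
The plan is to reduce to the case of a finite free module via a finite presentation of $M$, to rephrase the statement as the commutation of $\lim_n$ with a cokernel, and then to isolate the obstruction to that commutation as a single $\lim^1$-term which I kill by a Mittag--Leffler argument.

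First I would record that the natural comparison map $\theta_M\colon M\otimes_R \lim_n S_n \to \lim_n(M\otimes_R S_n)$ is an isomorphism when $M$ is finite free, since finite direct sums commute both with $-\otimes_R-$ and with $\lim_n$. As $R$ is noetherian and $M$ is finitely generated, $M$ is finitely presented, so I fix a presentation $R^p\xrightarrow{A}R^q\to M\to 0$. Writing $T:=\lim_n S_n$, and using right-exactness of $\otimes$ on the top and left-exactness of $\lim_n$ on the bottom, I obtain a commutative ladder
\[ \xymatrix{
T^p \ar[r]^-{A} \ar[d]^{\wr} & T^q \ar[r] \ar[d]^{\wr} & M\otimes_R T \ar[r] \ar[d]^{\theta_M} & 0 \\
\lim_n S_n^p \ar[r]^-{A} & \lim_n S_n^q \ar[r] & \lim_n(M\otimes_R S_n) &
} \]
whose two outer vertical maps are isomorphisms by the finite free case. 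A four-lemma chase shows $\theta_M$ is an isomorphism once (i) $\lim_n S_n^q\to\lim_n(M\otimes_R S_n)$ is surjective, and (ii) the lower row is exact at the middle, i.e. $\lim_n S_n^p$ surjects onto $\lim_n I_n$, where $I_n:=\mathrm{im}(A\colon S_n^p\to S_n^q)=\ker(S_n^q\to M\otimes_R S_n)$.

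Both (i) and (ii) are $\lim^1$-statements for the short exact sequences of systems $0\to I_n\to S_n^q\to M\otimes_R S_n\to 0$ and $0\to K_n\to S_n^p\to I_n\to 0$, with $K_n:=\ker(A\colon S_n^p\to S_n^q)$. For the systems of interest the transition maps of $S_\bullet$ are surjective; hence so are those of $S_n^p,S_n^q$, and, since $A$ comes from a fixed matrix over $R$, also those of $I_\bullet$. Thus $I_\bullet$ is Mittag--Leffler, $\lim^1_n I_n=0$, and (i) follows. For (ii) I am reduced to $\lim^1_n K_n=0$; splicing the two syzygy sequences of the presentation identifies the essential part of $K_\bullet$, up to image-systems with surjective transition maps, with the tower $\{\mathrm{Tor}_1^R(M,S_n)\}_n$, so everything comes down to
\[ \lim^1_n \mathrm{Tor}_1^R(M,S_n)=0. \]

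The hard part is exactly this last vanishing, and it is where noetherianity and completeness are used. I would deduce it from the Artin--Rees lemma, which gives that the tower $\{\mathrm{Tor}_1^R(M,R/\mathfrak{a}^nR)\}_n$ is pro-zero (its transition maps are eventually zero); this pro-vanishing then propagates to $\{\mathrm{Tor}_1^R(M,S_n)\}_n$ for the systems $S_n$ under consideration, whence $\lim^1=0$. The same bookkeeping, organized through the hyper-derived spectral sequence $E_2^{s,t}=\lim^s_n\mathrm{Tor}_{-t}^R(M,S_n)\Rightarrow \mathrm{Tor}_{-s-t}^R(M,T)$ (which degenerates into a four-term exact sequence because $\lim^s_n=0$ for $s\ge 2$), confirms that the cokernel of $\theta_M$ vanishes and its kernel is exactly $\lim^1_n\mathrm{Tor}_1^R(M,S_n)$, so that controlling this one Mittag--Leffler obstruction is both necessary and sufficient.
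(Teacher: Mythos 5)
Your first half is essentially the paper's own argument made precise: the paper likewise fixes a finite presentation $R^k \rightarrow R^m \rightarrow M \rightarrow 0$ (using that $R$ is noetherian), tensors it with each $S_n$ and with $\lim_n S_n$, and compares the two resulting right-exact sequences. What you add is the correct observation that the paper's middle step --- that applying $\lim_n$ to the right-exact sequences $S_n^k \rightarrow S_n^m \rightarrow M\otimes_R S_n \rightarrow 0$ again yields a right-exact sequence --- is not automatic, and the correct identification of the two obstructions as ${\lim}^1_n I_n$ and (in effect) ${\lim}^1_n \mathrm{Tor}_1^R(M,S_n)$. The four-lemma reduction to these two vanishing statements is fine.

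The gap is in how you discharge them. First, surjectivity of the transition maps of $(S_n)$ is not a hypothesis of the lemma. Second, and decisively, the claim that pro-vanishing of $\mathrm{Tor}_1^R(M,R/\mathfrak{a}^n)$ (which Artin--Rees does give) ``propagates'' to $\mathrm{Tor}_1^R(M,S_n)$ is unjustified: by the change-of-rings spectral sequence, $\mathrm{Tor}_1^R(M,S_n)$ also contains a graded piece $\mathrm{Tor}_1^{R/\mathfrak{a}^n}(M/\mathfrak{a}^nM,S_n)$ which Artin--Rees does not control. In fact this step cannot be repaired at the stated level of generality: take $R=\Zp$, $\mathfrak{a}=(p)$, $M=\Z/p$, and $S_n=\bigoplus_{k\ge 1}\Z/p^{\max(1,n-k)}$ with componentwise reduction as (surjective!) transition maps; then $\mathrm{Tor}_1^R(M,S_n)=S_n[p]$ has ${\lim}^1_n S_n[p]\neq 0$, and one computes directly that $M\otimes_R \lim_n S_n \rightarrow \lim_n (M\otimes_R S_n)$ is surjective with kernel $\bigl(\prod_k \Z/p\bigr)/\bigl(\bigoplus_k \Z/p\bigr)\neq 0$. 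So you have correctly located the missing ingredient but it cannot be supplied for arbitrary projective systems of $R/\mathfrak{a}^n$-modules; note that neither your argument nor the paper's ever actually uses the completeness of $R$, which is a warning sign. The lemma is only applied in the paper to adic systems $S_n=S/\mathfrak{a}^nS$ with $S$ a noetherian, $\mathfrak{a}$-adically complete algebra, and there the clean argument is different: $M\otimes_R S$ is a finite module over the complete noetherian ring $S$, hence is itself $\mathfrak{a}$-adically complete, i.e.\ equal to $\lim_n (M\otimes_R S)/\mathfrak{a}^n(M\otimes_R S)=\lim_n (M\otimes_R S_n)$.
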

\begin{proof}
Since $R$ is noetherian, we have an exact sequence of $R$-modules
\[ \xymatrix{ R^k \ar[r] & R^m \ar[r] & M \ar[r] & 0  } \]
hence the exact sequence
\[ \xymatrix{ S_n^k \ar[r] & S_n^m \ar[r] & S_n \otimes_{R} M \ar[r] & 0  } \]
and, taking the limit, the exact sequence
\[ \xymatrix{ (\lim_n S_n)^k \ar[r] & (\lim_n S_n)^m \ar[r] & \lim_n (S_n \otimes_{R} M) \ar[r] & 0.  } \]
On the other hand by tensoring the original sequence with $\lim_n S_n$, we get
\[ \xymatrix{ (\lim_n S_n)^k \ar[r] & (\lim_n S_n)^m \ar[r] & M \otimes_R (\lim_n S_n) \ar[r] & 0.  } \]
Comparing the two exact sequences proves the assertion.
\end{proof}

\begin{PROP}
Let $R$ be a ring with ideal $\mathfrak{a}$.

Let $(\alpha, \mu): (I, F) \rightarrow (J, G)$ be a morphism of diagrams of noetherian $R$-algebras
 such that $C_{\mathfrak{a}} F$ and $C_{\mathfrak{a}} G$ consist of separated and noetherian $R$-algebras, where $C_{\mathfrak{a}}$ means completion w.r.t.\@ to the $\mathfrak{a}$-adic topology.

If $(\alpha, \mu)$ is of descent for finitely generated modules then  
$(I, C_{\mathfrak{a}} F) \rightarrow (J, C_{\mathfrak{a}} G)$ is of descent for finitely generated modules. 
\end{PROP}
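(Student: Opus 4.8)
The plan is to reduce the statement to the already-established finite descent of $(\alpha,\mu)$ by interposing the projective systems of truncations studied in Lemma~\ref{LEMMAPROJSYS}. Write $\widehat{F} = C_{\mathfrak{a}} F$ and $\widehat{G} = C_{\mathfrak{a}} G$, and let $F_\bullet$ on $I \times \N^{\op}$, resp.\@ $G_\bullet$ on $J \times \N^{\op}$, be the diagrams with $F_n(i) = F(i)\otimes_R R/\mathfrak{a}^n$ and $G_n(j) = G(j)\otimes_R R/\mathfrak{a}^n$. Because $F(i)$ and $G(j)$ are noetherian, $F_n(i) \cong \widehat{F}(i)\otimes_R R/\mathfrak{a}^n$ and $G_n(j)\cong\widehat{G}(j)\otimes_R R/\mathfrak{a}^n$, so $F_\bullet$ and $G_\bullet$ are exactly the projective systems that Lemma~\ref{LEMMAPROJSYS} attaches to the complete separated noetherian diagrams $(I,\widehat{F})$ and $(J,\widehat{G})$. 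The completed morphism $\widehat{\Psi} := (\alpha,\widehat{\mu})\colon (I,\widehat{F})\to(J,\widehat{G})$, whose being of finite descent is the assertion, then fits into a $2$-commutative square
\[ \xymatrix{
(I\times\N^{\op}, F_\bullet) \ar[r]^-{\Phi} \ar[d]_{p_I} & (J\times\N^{\op}, G_\bullet) \ar[d]^{p_J} \\
(I,\widehat{F}) \ar[r]_-{\widehat{\Psi}} & (J,\widehat{G})
} \]
in which $\Phi = (\alpha\times\id_{\N^{\op}}, \mu_\bullet)$ is the reduction of $(\alpha,\mu)$ modulo the powers of $\mathfrak{a}$ and the vertical morphisms are the projections. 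By Lemma~\ref{LEMMAPROJSYS} both $p_I$ and $p_J$ are of finite descent; hence, by the $2$-out-of-$3$ property (Proposition~\ref{PROPDESCENT}, 1.), it suffices to prove that the top morphism $\Phi$ is of finite descent.

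For $\Phi$ I would argue level-wise over $\N^{\op}$. For fixed $n$ the morphism $\Phi_n\colon (I,F_n)\to(J,G_n)$ is precisely the base change of $(\alpha,\mu)$ along $R\to R/\mathfrak{a}^n$; since $R/\mathfrak{a}^n$ is a cyclic $R$-module, this ring map is finite, so Lemma~\ref{LEMMADESCBC} turns the hypothesis that $(\alpha,\mu)$ is of finite descent into the statement that each $\Phi_n$ is of finite descent. It then remains to assemble these level-wise equivalences into an equivalence over $\N^{\op}$. For this I would extend the identification used in the proof of Lemma~\ref{LEMMADESCBC}: the category of $(I\times\N^{\op},F_\bullet)$-modules is equivalent to the category of $\N^{\op}$-towers of $(I,F)$-modules equipped with a compatible $R/\mathfrak{a}^\bullet$-action, and under this identification $\Phi^*$ is nothing but $(\alpha,\mu)^*$ applied level-wise and compatibly with the transition (reduction) maps. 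Moreover coCartesianity over $I\times\N^{\op}$ decomposes as coCartesianity in the $I$-direction for every $n$ together with coCartesianity in the $\N^{\op}$-direction, and finiteness is point-wise; so $\Phi^*$ is a level-wise equivalence of the associated pseudo-functors $\N^{\op}\to\Cat$ that is compatible with the transition functors, and therefore induces an equivalence on the categories of coCartesian finite sections. This is the desired finite descent of $\Phi$.

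The main obstacle is exactly this last assembly step: one must check that a pseudo-natural transformation between the two $\N^{\op}$-indexed families $n\mapsto\cat{$(J,G_n)$-mod-f.g.-cocart}$ and $n\mapsto\cat{$(I,F_n)$-mod-f.g.-cocart}$ which is a level-wise equivalence (each $\Phi_n^*$) induces an equivalence on the categories of coCartesian, finitely generated towers. Concretely, full faithfulness follows from level-wise full faithfulness together with the compatibility of the $\Phi_n^*$ with the reductions $R/\mathfrak{a}^n\to R/\mathfrak{a}^m$; for essential surjectivity one lifts a given tower on the $(J,\widehat{G})$-side level by level, using that each $\Phi_n^*$ is essentially surjective and that the transition isomorphisms can be transported through the equivalences.

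Two ancillary points remain to verify. First, that the square above indeed $2$-commutes, which amounts to the fact that reduction modulo $\mathfrak{a}^n$ commutes with $\widehat{\mu}$ and with the structure maps. Second, that Lemma~\ref{LEMMAPROJSYS} genuinely applies, for which one uses the hypotheses that $\widehat{F}=C_{\mathfrak{a}}F$ and $\widehat{G}=C_{\mathfrak{a}}G$ are separated and noetherian, together with noetherianity of $R$ (which also underlies the isomorphism $\widehat{F}(i)/\mathfrak{a}^n\widehat{F}(i)\cong F(i)/\mathfrak{a}^n F(i)$ identifying the two towers).
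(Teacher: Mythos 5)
Your proof is correct and follows essentially the same route as the paper: reduce modulo $\mathfrak{a}^n$ and apply Lemma~\ref{LEMMADESCBC} (the reduction maps being finite) to get finite descent of each $(I,F_n)\rightarrow(J,G_n)$, assemble these over $\N^{\op}$, and conclude via the same commutative square using Lemma~\ref{LEMMAPROJSYS} for the vertical arrows and the 2-out-of-3 property. The only (harmless) divergence is in the assembly step, where the paper simply invokes the point-wise criterion of Proposition~\ref{PROPDESCENT} while you spell out the pseudo-limit argument by hand; your added check that $\widehat{F}(i)\otimes_R R/\mathfrak{a}^n\cong F(i)\otimes_R R/\mathfrak{a}^n$, needed to apply Lemma~\ref{LEMMAPROJSYS} to the completed diagrams, is a detail the paper leaves implicit.
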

\begin{proof}
Define $F_n$ as $F \otimes_R R/\mathfrak{a}^nR$. We have by definition $C_{\mathfrak{a}} F = \lim_n F_n$.
Lemma \ref{LEMMADESCBC} implies that $(I,  F_n) \rightarrow (J, G_n)$ is of descent for finitely generated modules.
Therefore by Proposition~\ref{PROPDESCENT}, 4.\@ the morphism $(I \times \N^{\op}, F_\bullet) \rightarrow (J \times \N^{\op}, G_\bullet)$ is of descent for finitely generated modules. 
Now we have the commutative diagram of diagrams of rings
\[ \xymatrix{
(I \times \N^{\op}, F_\bullet) \ar[r] \ar[d] & (J \times \N^{\op}, G_\bullet) \ar[d] \\
(I, C_\mathfrak{a}F) \ar[r] & (J,  C_\mathfrak{a}G)
} \]
in which the upper horizontal morphism is of descent for finitely generated modules and the vertical ones are of descent for finitely generated modules by Lemma~\ref{LEMMAPROJSYS}. Hence so is the lower horizontal one. 
\end{proof}

\section{Descent along basic formal/open coverings}

\begin{PROP}\label{BASICFORMALDESCENT}
Let $R$ be a noetherian ring and $f$ a non-zero divisor of $R$. Denote $\widehat{R}$ the completion of $R$ w.r.t.\@ $(f)$-adic topology and let $R_f$, and $\widehat{R}_f$, the
rings $R[f^{-1}]$, and $\widehat{R}[f^{-1}]$, respectively. 
Then 
\begin{enumerate}
\item The morphism of diagrams 
\[ p: D:=\left( \vcenter{ \xymatrix{& \widehat{R}  \ar[d]  \\ R_f  \ar[r] &  \widehat{R}_f } }  \right) \rightarrow  R \]
is of descent for arbitrary modules (resp.\@ for finitely generated modules). 

\item  For any sequence of ideals $I_i$ and elements $f_i$ such that $I_{f_i} C_{I_i} \cdots R \hookrightarrow I_{f_i} C_{I_i} \cdots R_f$ is injective, and the morphism 
\[ \widetilde{p}:  I_{f_i} C_{I_i} \cdots  D \rightarrow  I_{f_i} C_{I_i} \cdots R, \]
the functor $\widetilde{p}^*$ is fully-faithful. Here $I_{f}$, for an element $f \in R$, denotes the functor $R' \mapsto  R'[f^{-1}]$.
\end{enumerate}
\end{PROP}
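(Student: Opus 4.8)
The plan is to unwind fully-faithfulness into a concrete exactness statement and then to propagate it through the operations $C_{I_i}$ and $I_{f_i}$ one step at a time. Write $\Phi = I_{f_i} C_{I_i} \cdots$ for the composite functor, so that $\widetilde{p}$ is a morphism $\Phi(D) \to \Phi(R)$ from a cospan-diagram into a single object. Since the target is a single object, Proposition~\ref{PROPDESCENT}, 3.\@ (in the form used in the proof of Lemma~\ref{LEMMAREFINE}) tells us that $\widetilde{p}^*$ is fully-faithful exactly when the unit $\id \to \widetilde{p}_* \widetilde{p}^*$ is an isomorphism; by the point-wise formula for $\widetilde{p}_*$ over the cospan this means that for every (finitely generated) $\Phi(R)$-module $M$ the canonical map
\[ M \longrightarrow \bigl( M \otimes_{\Phi(R)} \Phi(\widehat{R}) \bigr) \times_{M \otimes_{\Phi(R)} \Phi(\widehat{R}_f)} \bigl( M \otimes_{\Phi(R)} \Phi(R_f) \bigr) \]
is an isomorphism, equivalently that the Mayer--Vietoris sequence
\[ 0 \to M \to \bigl(M \otimes \Phi(\widehat{R})\bigr) \oplus \bigl(M \otimes \Phi(R_f)\bigr) \to M \otimes \Phi(\widehat{R}_f) \]
is exact at its first two terms. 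For $\Phi = \id$ this is precisely the fully-faithfulness contained in assertion 1., so it suffices to show that each elementary step preserves this property.

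The localization steps are the easy ones. Suppose the displayed exactness holds for a composite $\Phi'$ and set $\Phi = I_g \circ \Phi'$. Any finitely generated $\Phi'(R)[g^{-1}]$-module $N$ has the form $N = N_0[g^{-1}]$ for some finitely generated $\Phi'(R)$-module $N_0$; applying the inductive hypothesis to $N_0$ and then inverting $g$ --- which is exact and commutes with each of the tensor products $- \otimes_{\Phi'(R)} \Phi'(\widehat{R})$ etc.\@ --- yields exactly the Mayer--Vietoris sequence for $N$. Hence $I_g$ preserves fully-faithfulness. Note that localization does \emph{not} preserve essential surjectivity of the descent functor (the cospan may degenerate), which is precisely why only fully-faithfulness, and not full descent, is asserted once localizations are involved.

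The completion steps are the crux, and this is where the injectivity hypothesis enters. For $\Phi = C_J \circ \Phi'$ I would mimic the proof of the previous proposition: set $F_n := \Phi'(D) \otimes_{\Phi'(R)} \Phi'(R)/J^n$, use the fully-faithful analogue of Lemma~\ref{LEMMADESCBC} at each finite level together with Proposition~\ref{PROPDESCENT}, 4.\@ to obtain fully-faithfulness of $(I \times \N^{\op}, F_\bullet) \to (\N^{\op}, \Phi'(R)/J^\bullet)$, and then pass to the limit through the commutative square whose vertical morphisms are the equivalences supplied by Lemma~\ref{LEMMAPROJSYS}; since fully-faithfulness is stable under composition with equivalences, the bottom morphism $C_J \Phi'(D) \to C_J \Phi'(R)$ is again fully-faithful. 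The subtle point, and the main obstacle, is that the passage to the finite levels $\Phi'(R)/J^n$ is only right exact, so the left-exactness at the first two terms of the Mayer--Vietoris sequence could a priori be destroyed by the completion, which may create $f$-torsion. The hypothesis that $\Phi(R) = I_{f_i} C_{I_i} \cdots R \hookrightarrow I_{f_i} C_{I_i} \cdots R_f = \Phi(R_f)$ remains injective is precisely the input that controls this: it prevents the completion from introducing the $f$-torsion that would break left-exactness, and is thus exactly what keeps $\widetilde{p}^*$ faithful through the completion steps. Feeding this back into the induction completes the argument.
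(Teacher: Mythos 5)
Your reduction of fully-faithfulness of $\widetilde{p}^*$ to exactness of the sequence $0 \to M \to (M\otimes\Phi(\widehat R))\oplus(M\otimes\Phi(R_f)) \to M\otimes\Phi(\widehat R_f)$ at its first two terms, for every finitely generated $\Phi(R)$-module $M$, is correct and is also the mechanism the paper uses; your localization step is fine as well. But there are two genuine gaps. First, you never prove assertion 1. You invoke ``the fully-faithfulness contained in assertion 1'' as the base case of your induction, but assertion 1 is part of what is to be proved, and it claims more than fully-faithfulness: it claims an equivalence of categories. The paper spends most of its proof on exactly this. It establishes flatness of $R_f$ and $\widehat R$ over $R$ together with exactness of $0\to R\to R_f\oplus\widehat R\to\widehat R_f\to 0$ (the sequence (\ref{eqrf})), which gives the unit isomorphism, and then, for essential surjectivity, shows that for any descent datum $(\widehat M, M_f,\widehat M_f)$ the map $\widehat M\oplus M_f\to\widehat M_f$ is surjective, forms $N$ as its kernel, and proves $N\otimes_R\widehat R\cong\widehat M$ and $N\otimes_R R_f\cong M_f$ by a diagram chase; finiteness is then handled by Lemma~\ref{LEMMADESCFINITE}. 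None of this appears in your proposal.

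Second, the completion step of your induction is not closed. You correctly identify the danger (loss of left-exactness upon passing to the quotients $\Phi'(R)/J^n$), but you resolve it only by assertion: saying that the injectivity hypothesis ``prevents the completion from introducing the $f$-torsion that would break left-exactness'' is a restatement of what must be proved, not a proof. Note moreover that the required exactness is for \emph{every} finitely generated $\Phi(R)$-module $M$, whereas the injectivity hypothesis concerns only the ring itself; to pass from the ring to arbitrary $M$ one needs flatness of $\Phi(\widehat R)$ and $\Phi(R_f)$ over $\Phi(R)$ together with exactness of the sequence of rings. This is in fact how the paper handles part 2, and it is much shorter than your route: one observes that applying the functors $C_{I_i}$ and $I_{f_i}$ to the sequence (\ref{eqrf}) again yields an exact sequence $0\to\Phi(R)\to\Phi(R_f)\oplus\Phi(\widehat R)\to\Phi(\widehat R_f)\to 0$ --- the stated injectivity being precisely the exactness at the first spot --- whence the unit $\id\to\widetilde p_*\widetilde p^*$ of the new adjunction is still an isomorphism. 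Your detour through finite levels and Lemma~\ref{LEMMAPROJSYS} would additionally require the finite-level morphisms over $\Phi'(R)/J^n$ to be fully faithful, which is itself unclear (the maps $\Phi'(R)/J^n\to\Phi'(R_f)/J^n$ need not be injective), so the direct argument on the exact sequence is the one to use here.
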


\begin{proof}
We will actually need only the following axioms on the diagram of rings $D$:
\begin{enumerate}
\item[(a)] $R \rightarrow R_f$ and $R \rightarrow \widehat{R}$ are {\em flat} $R$-algebras and $\widehat{R}_f \cong \widehat{R} \otimes_R R_f$.
\item[(b)] The sequence
\begin{equation} \label{eqrf}
\xymatrix{ 0 \ar[r] & R \ar[r] & R_f \oplus \widehat{R} \ar[r] & \widehat{R}_f \ar[r] & 0  } 
\end{equation}
is exact.
\item[(c)] For each object (i.e.\@ descent datum) $(\widehat{M}, M_f, \widehat{M}_f) \in \cat{$D$-mod-cocart}$ the map
\[ \widehat{M} \oplus M_f \rightarrow \widehat{M}_f \]
is surjective. 
\end{enumerate}

Let us verify that the axioms (a--c) hold in the situation of the lemma. 
(a) Flatness of $R_f$ is clear. $\widehat{R}$ is flat, because $R$ is noetherian. The tensor property holds by construction.

(b) That the last map is surjective is clear. Hence the statement boils down to the Cartesianity of the diagram
of $R$-modules  
\[ \xymatrix{
R \ar[r]^{f^n} \ar[d] & R \ar[d] \\
\widehat{R} \ar[r]^{f^n} & \widehat{R}
} \]
for any $n$.
This diagram is Cartesian because $f$ is not a zero divisor in $R$ (and hence neither in $\widehat{R}$) and we have an isomorphism 
\[ R/f^nR \cong \widehat{R} / f^n \widehat{R}. \]
(c) Let $\rho$ be the composition of the $\widehat{R}_f$-module isomorphism $M_f \otimes_{R_f} \widehat{R}_f \rightarrow \widehat{M}_f$ and the inverse of
$\widehat{M} \otimes_{\widehat{R}} \widehat{R}_f \rightarrow \widehat{M}_f$. Let $m_f$ be any element of $M_f$. 
 We have for all $q \in \widehat{R}_f$:
\[ \rho^{-1}(m_f \otimes q) = \sum_j \widehat{m}_{j} \otimes f^{-n_{j}} p_j q , \]
where $p_j \in \widehat{R}$ are independent of $q$. Hence this element is of the form $\widehat{m} \otimes 1$ if $q$ is sufficiently divisible by $f$.
Therefore, writing any given $m_f \otimes p$, where $p \in \widehat{R}_f$ as $m_f' \otimes 1 + m_f \otimes q$ where $q$ is sufficiently divisible by $f$, we see that the map $\widehat{M} \oplus M_f \rightarrow \widehat{M}_f$ is surjective.

Now assume that the axioms (a--c) hold. 
First observe that also $\widehat{R}_f$ is flat over $R$ (base change of flatness).
Let $M$ be an $R$-module. Tensoring the sequence (\ref{eqrf}) with $M$ over $R$ yields the exact sequence
\[ \xymatrix{ 0 \ar[r] & M \ar[r] & M_f \otimes \widehat{M} \ar[r] & \widehat{M}_f \ar[r] & 0 }  \]
where $M_f:=M \otimes_R R_f$, $\widehat{M} := M \otimes_R \widehat{R}$, and $\widehat{M} \otimes_R \widehat{R}_f$.
This shows that $M$ can be reconstructed as the limit over the diagram
\[ \left( \vcenter{ \xymatrix{   & \widehat{M}  \ar[d] \\ M_f \ar[r] & \widehat{M}_f } }  \right)  \]
Consequently the unit $\id \rightarrow p_* p^*$ of the adjunction is an isomorphism. 

For the second assertion of the proposition observe that application of the functors $C_{I_i}$, and $I_{f_i}$, respectively, induce exact sequences
\[ \xymatrix{ 0 \ar[r] & I_{f_i} C_{I_i} \cdots R \ar[r] & I_{f_i} C_{I_i} \cdots R_f \oplus  I_{f_i} C_{I_i} \cdots \widehat{R} \ar[r] & I_{f_i} C_{I_i} \cdots \widehat{R}_f \ar[r] & 0  } \] 
Therefore also after applying those functors to $D$, and $R$, respectively, we have that the unit $\id \rightarrow \widetilde{p}_* \widetilde{p}^*$ of the new adjunction is an isomorphism.

Now let $(\widehat{M}, M_f, \widehat{M}_f)$ be a descent datum. We form the exact sequence (cf.\@ axiom (c) for the surjectivety of the map to $\widehat{M}_f$):
\[ \xymatrix{ 0 \ar[r] & N \ar[r] & M_f \otimes \widehat{M} \ar[r] & \widehat{M}_f \ar[r] & 0 }  \]
To see that the counit $p^* p_* \rightarrow \id$ is an isomorphism, we have to show that the natural maps $N \otimes_R R_f \rightarrow M_f$ and $N \otimes_R \widehat{R} \rightarrow \widehat{M}$ are isomorphisms. 
Exactness of the sequence (\ref{eqrf}) yields that the following diagram is Cartesian
\[ \xymatrix{
R \ar[r] \ar[d] & R_f \ar[d] \\
\widehat{R}  \ar[r] & \widehat{R}_f
} \]
From the Cartesianity, we may conclude that in the following diagram the right vertical morphisms is an isomorphism:
\[ \xymatrix{
0 \ar[r] & R \ar[r] \ar[d] & R_f \ar[d] \ar[r] & \ar[d]^\sim R_f/R\ar[r] & 0   \\
0 \ar[r] & \widehat{R} \ar[r] & \widehat{R}_f  \ar[r] & \widehat{R}_f/ \widehat{R}\ar[r] & 0
} \]
Tensoring this diagram with $\widehat{R}$ over $R$ observe that the whole morphism of exact sequences splits (via the multiplication maps), hence we get a diagram with exact rows and columns:
\[ \xymatrix{
& 0 \ar[d] & 0 \ar[d]  \\ 
& \mathrm{ker}  \ar[d] \ar@{=}[r] & \mathrm{ker} \ar[d]  \\ 
0 \ar[r] & \widehat{R} \otimes_R \widehat{R}  \ar[d] \ar[r] & \widehat{R}_f \otimes_R \widehat{R}  \ar[d] \ar[r] & (\widehat{R}_f/ \widehat{R}) \otimes_R \widehat{R} \ar[r]  \ar[d]^\sim & 0 \\
0 \ar[r] & \widehat{R} \ar[d] \ar[r] & \widehat{R}_f \ar[r] \ar[d] & (R_f/R) \otimes \widehat{R} \ar[r] & 0   \\
& 0 & 0 
} \]
Tensoring the diagram over $\widehat{R}$ with $\widehat{M}$ we get exact colums which we may insert in the following diagram
which gets exact rows and columns
\[ \xymatrix{
&  & 0 \ar[d] & 0 \ar[d]  \\ 
& & \mathrm{ker} \otimes_{\widehat{R}} \widehat{M}  \ar[d] \ar@{=}[r] &  \mathrm{ker} \otimes_{\widehat{R}} \widehat{M}  \ar[d]  \\ 
0 \ar[r] & N \otimes_R \widehat{R}  \ar[d] \ar[r] & \widehat{M} \otimes_R \widehat{R}  \oplus \widehat{M}_f \ar[d] \ar[r] & \widehat{M}_f \otimes_R \widehat{R}  \ar[r]  \ar[d] & 0 \\
0 \ar[r] & \widehat{M} \ar[r] & \widehat{M} \oplus \widehat{M}_f \ar[r] \ar[d] & \widehat{M}_f \ar[r] \ar[d] & 0  \\
 & & 0 & 0 
} \]
This shows that the natural map $N \otimes_R \widehat{R} \rightarrow  \widehat{M}$ is an isomorphism. 

That also $N \otimes_{R} R_f \rightarrow M_f$ is an isomorphism follows because the axioms (a--c) are completely symmetric in $\widehat{R}$ and $R_f$. 
Actually, if we have, as in the formulation of the proposition, that $R \rightarrow R_f$ is an epimorphism of $R$-algebras, i.e.\@ that $R_f = R_f \otimes_R R_f$, then
this is even easier. 

Finally the statement of descent for {\em finitely generated} modules follows from Lemma~\ref{LEMMADESCFINITE}.
\end{proof}

\section{Descent along completions w.r.t.\@\ a divisor with normal crossings}

\begin{DEF}\label{DEFDSNC}
A subscheme $D$ on a regular scheme $S$ is called a {\bf divisor with strict normal crossings} if it is the zero-locus of a Cartier divisor which is Zariski locally around any $p \in D$ of
the form $f_1 \cdots f_m$, where $f_1, \dots, f_m$ are part of a sequence of minimal generators of the maximal ideal at $p$. 
\end{DEF}

This definition differs slightly from \cite[Expos\'e I, 3.1.5, p.\@ 24]{SGA5} to the extent that there the existence of $f_1, \dots, f_m$ is claimed to exist globally.
Note that it follows from the definition that all $V(f_i)$ (defined locally) are regular themselves. 

\begin{PAR}
Let $X$ be a regular noetherian scheme and let $D \subset X$ be a divisor with strict normal crossings. Let $\{Y\}$ be the coarsest stratification of $X$ into locally closed subvarieties such that every component of $D$ is the closure of a stratum $Y$. For each stratum we define a sheaf of $\OO_X$-algebras on $X$:
\[ R_Y(U) := C_{\overline{Y}} \OO_X(U') =  \lim_n \OO_X(U')/\mathcal{I}_{\overline{Y}}(U')^n   \]
where $U' \subset U$ is such that $U' \cap \overline{Y} = U \cap Y$. It is clear that the definition of $R_Y$ does not depend on the choice of $U'$ for $R_Y$ may also be described
as $\iota_* \OO_{C_{\overline{Y}}X|_Y}$, where  $C_{\overline{Y}}X$ is the formal completion of $X$ along $\overline{Y}$, and $C_{\overline{Y}}X|_Y$ is the
open formal subscheme with underlying topological space equal to $Y$ and $\iota$ is the composed morphism of formal schemes $C_{\overline{Y}}X|_Y \rightarrow X$. 

For any chain of disjoint strata $Y_1, Y_2, \dots, Y_n$ such that $Y_i \subset \overline{Y_{i-1}}$, we define inductively a sheaf $R_{Y_1, \dots, Y_n}$ of $\OO_X$-algebras on $X$
which coincides with the previous one for $n=1$. For $n>1$ we set
\begin{equation}\label{constructr}
 R_{Y_1, \dots, Y_n}(U) := C_{\overline{Y_1}} \left(R_{Y_2, \dots, Y_n}(U) \otimes_{\OO_X(U)} \OO_X(U') \right).    
 \end{equation}
Again $U' \subset U$ is such that $U' \cap \overline{Y_1} = U \cap Y_1$, and this definition does not depend on the choice of $U'$. 
\end{PAR}

\begin{BEISPIEL}
Consider $X=\A^1_k$ and $D=\{0\}$. Then the strata are $D$ and $Y := X \setminus D$. We have
\[ R_Y(X) = k[x,x^{-1}] \quad R_D(X) = k\llbracket x \rrbracket \quad R_{Y,D}(X) = k\llbracket x \rrbracket[x^{-1}].  \]
\end{BEISPIEL}

Recall that for a sheaf of $\OO_X$-algebras $R$ as above, a {\bf coherent sheaf $M$ of $R$-modules} is a sheaf of $R$-modules such that on a covering $\{U_i\}$ of $X$, we have an
exact sequence
\[ R|_U^{n_i} \rightarrow R|_U^{m_i} \rightarrow M|_{U_i} \rightarrow 0 \]
for every $i$.

\begin{LEMMA}\label{LEMMARLOCAL}
If $U \subset X$ is affine then for any of the sheaves $R=R_{Y_1, \dots, Y_n}$ of $\OO_X$-modules defined above, we have an equivalence of categories
of coherent sheaves of $R|_U$-modules and finitely generated $R(U)$-modules. 
\end{LEMMA}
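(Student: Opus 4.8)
The plan is to prove this as the adic analogue of Serre's comparison between coherent sheaves on an affine scheme $\spec A$ and finitely generated modules over the noetherian ring $A$. Concretely, I would identify the ringed space $(U, R|_U)$ with an affine noetherian formal scheme (pushed forward along the embedding of its underlying space into $U$), and then invoke the comparison theorem of \cite{EGAI} between coherent modules on a noetherian affine formal scheme and finitely generated modules over its ring of global sections. The whole difficulty is thus concentrated in recognizing the formal-scheme structure of $R|_U$; once that is in hand the conclusion is a citation.

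First I would establish two structural inputs. (i) The ring $R(U)$ is noetherian. This follows by induction on $n$ from the construction (\ref{constructr}): $\OO_X(U)$ is noetherian because $X$ is, and both completion along an ideal and localization preserve noetherianity, so each step $M \mapsto C_{\overline{Y_1}}\bigl(M \otimes_{\OO_X(U)} \OO_X(U')\bigr)$ sends a noetherian ring to a noetherian ring. (ii) The sheaf $R|_U$ is the push-forward $\iota_* \OO_{\mathfrak{Z}}$ of the structure sheaf of the affine noetherian formal scheme $\mathfrak{Z} := \spf R(U)$, taken for the adic topology of the outermost completion $C_{\overline{Y_1}}$, along the embedding $\iota$ of its underlying topological space, which is the deepest stratum $U \cap Y_n$. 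Here one uses that $U \cap Y_n$ is affine — it is the complement in the affine scheme $U \cap \overline{Y_n}$ of finitely many divisors, hence again affine — and that for a distinguished open $U_g \subset U$ one has $R(U_g) = R(U)\{g^{-1}\}$, the \emph{completed} localization, so that $R|_U$ is quasi-coherent on $\mathfrak{Z}$ (though not on $X$).

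Granting (i) and (ii), I would conclude as follows. Since $R|_U = \iota_*\OO_{\mathfrak{Z}}$ and $\iota$ is an embedding of topological spaces, a coherent sheaf of $R|_U$-modules on $U$ is the same datum as a coherent $\OO_{\mathfrak{Z}}$-module on $\mathfrak{Z}$, and global sections agree, $\Gamma(U, \iota_* \mathcal{F}) = \Gamma(\mathfrak{Z}, \mathcal{F})$. The comparison theorem for the affine noetherian formal scheme $\mathfrak{Z}$ then supplies the equivalence between coherent $\OO_{\mathfrak{Z}}$-modules and finitely generated $\Gamma(\mathfrak{Z}, \OO_{\mathfrak{Z}}) = R(U)$-modules, via $M \mapsto M^{\Delta}$ and $\mathcal{F} \mapsto \Gamma(\mathfrak{Z}, \mathcal{F})$. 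Composing the two identifications yields the asserted equivalence, with quasi-inverse $\mathcal{M} \mapsto \Gamma(U, \mathcal{M})$.

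The main obstacle is ingredient (ii): showing rigorously that the iterated completion-and-localization of (\ref{constructr}) produces a genuine noetherian affine formal scheme and that, on distinguished opens, $R$ is computed by the completed localization $R(U)\{g^{-1}\}$ rather than the naive $R(U)_g$. This is precisely where $R$ fails to be quasi-coherent over $\OO_X$, since completion does not commute with localization — already $(k[y][[x]])[y^{-1}] \subsetneq k[y,y^{-1}][[x]]$, the right-hand side being the completed localization — so one genuinely needs the formal-scheme picture and cannot argue purely with $\OO_X$-modules. I expect this verification to proceed by the same induction on $n$ as in (i), each step being controlled by the stated independence of the auxiliary open $U'$.
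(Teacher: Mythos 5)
Your ingredient (i) is fine, and your instinct to bring in the formal-scheme comparison theorem is in the spirit of the paper's (one-line) proof, which says the lemma ``is shown \emph{as} the analogous property for coherent sheaves on a noetherian formal scheme'' --- i.e.\ by repeating that argument for the sheaves $R$. But your ingredient (ii), the actual reduction, is false for $n\ge 2$: the ringed space $(U\cap Y_n,\,R|_{U\cap Y_n})$ is \emph{not} the structure sheaf of an affine noetherian formal scheme $\spf R(U)$. The rings $R_{Y_1,\dots,Y_n}(U)$ are built by alternately completing and localizing, and each localization inverts exactly the elements along which an inner completion was taken, so the resulting ring carries no adic topology whose $\spf$ reproduces the stratum $U\cap Y_n$. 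Concretely, take $X=\A^2_k=\spec k[x,y]$, $D=V(xy)$, and the chain $Y_1=X\setminus D \,>\, Y_2=V(x)\setminus\{0\}$. Then $A:=R_{Y_1,Y_2}(X)=k[y,y^{-1}][[x]][x^{-1}]$, the underlying space of the sheaf is $Y_2\cong\Gm$, and the sections over a distinguished open $D(h)\subset\Gm$ are $k[y,y^{-1}]_h[[x]][x^{-1}]$. Your recipe prescribes the adic topology of the outermost completion $C_{\overline{Y_1}}$; here $\overline{Y_1}=X$, so $\mathcal{I}_{\overline{Y_1}}=0$, the topology is discrete, $\spf A=\spec A$ is nothing like $\Gm$, and the ``completed localization'' degenerates to the ordinary one, $A_h$, which is strictly smaller than $k[y,y^{-1}]_h[[x]][x^{-1}]$ (the latter contains $\sum_n x^n h^{-n}$, the former does not). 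Nor does any other ideal of definition help: $x$ is a unit in $A$, so no quotient $A/I$ can have spectrum homeomorphic to $Y_2\subset V(x)$. The same phenomenon occurs for chains not starting at the open stratum, one level further in (e.g.\ $R_{Y_x,Y_{xy}}$ on $\A^3$ with $D=V(xyz)$ gives $k[z,z^{-1}]((y))[[x]]$ over the stratum $\Gm$).

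So the citation of the affine formal-scheme theorem cannot be made to carry the argument; what is needed --- and what the paper intends --- is to rerun the Serre/EGA proof directly for the sheaves $R$: show (by induction on $n$, using noetherianity as in your step (i)) that $R(U)\to R(U_g)$ is flat, that $U_g\mapsto M\otimes_{R(U)}R(U_g)$ satisfies the sheaf condition on distinguished opens with global sections $M$, and that every coherent $R|_U$-module, being locally finitely presented over the self-coherent sheaf $R|_U$, is globally of this form. Your proposal correctly isolates where the difficulty sits, but the resolution you offer for it --- recognizing a genuine noetherian formal scheme --- is not available.
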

\begin{proof}
This is shown as the analogous property for coherent sheaves of $\OO_{\mathcal{X}}$-modules on a noetherian formal scheme $\mathcal{X}$.
\end{proof}

Coming back to the stratification $\{Y\}$ of $X$, we define the following semi-simplicial set $S$. The set $S_n$ consists of
chains of {\em disjoint} strata $[Y_1, \dots, Y_n]$ such that $Y_i \subset \overline{Y_{i-1}}$ (we write also $Y_i < Y_{i-1}$) for $i=2\dots n$ with the obvious face maps. 
Alternatively consider the stratification as a partially ordered set where $Z \le Y$ if $Z \subset \overline{Y}$. $S$ is then the 
(semi-simplicial) nerve of this partially ordered set. 

We define also the category $\int S$ whose objects are
pairs $(n, \xi)$, where $n \in \N$ and $\xi \in S_n$, and whose morphisms $\mu: (n, \xi) \rightarrow (m, \xi')$ are morphisms $\mu: \Delta_n \rightarrow \Delta_m$ in $\Delta^{\circ}$ such that
 $S(\mu) (\xi') = \xi$. This is the fibered category associated with the functor $(\Delta^\circ)^{\op} \rightarrow \cat{set} \subset \cat{cat}$ via the Grothendieck construction. 
 
 The construction (\ref{constructr}) actually defines a functor
 \begin{eqnarray*} 
 R: \int S &\rightarrow& \cat{$\OO_X$-alg} \\ 
  (n, Y_1< \dots< Y_n) &\mapsto& R_{Y_1, \dots, Y_n}. 
 \end{eqnarray*}
 
\begin{LEMMA}
The category 
\[ \cat{$(\int S, R)$-coh-cocart}  \]
is equivalent to the category of the following descent data:
For each stratum $Y$ a coherent sheaf $M_Y$ of $R_Y$-modules together with isomorphisms
\[  \rho_{Y',Z}: M_Y \otimes_{R_Y} R_{Y,Z} \rightarrow M_Z \otimes_{R_Z} R_{Y,Z} \]
 for any $Y, Z$ with $Z \subset \overline{Y}$, 
which are compatible w.r.t.\@ any triple $Y, Z, W$ of strata with $Z \subset \overline{Y}$ and $W \subset \overline{Z}$ in the obvious way.
\end{LEMMA}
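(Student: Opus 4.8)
The plan is to unwind the definition of a coCartesian lift of $R$ along $\int S$ and to match its data term by term with the proposed descent data; the whole argument is the poset-indexed analogue of Proposition~\ref{PROPDESCENT}, 6. Recall for orientation that a morphism in $\int S$ runs from a face of a simplex to the simplex itself, that the chains of length one $[Y]$ are precisely the initial objects (in the sense introduced before Lemma~\ref{LEMMAREFINE}), and that for each face inclusion the given functor $R$ supplies the relevant algebra map, e.g.\@ $R_{Y_1} \to R_{Y_1,\dots,Y_n}$ attached to the first vertex and $R_{Y,Z} \to R_{Y,Z,W}$ attached to a $2$-face.

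First I would describe the forward functor. An object $M \in \cat{$(\int S, R)$-coh-cocart}$ assigns to every chain a coherent module together with a coCartesian transition along every face inclusion. Setting $M_Y := M_{[Y]}$ gives coherent $R_Y$-modules. For a pair $Z \subset \overline{Y}$ the two faces $[Y]$ and $[Z]$ of the chain $[Y,Z]$ yield coCartesian maps, hence isomorphisms $M_Y \otimes_{R_Y} R_{Y,Z} \iso M_{[Y,Z]}$ and $M_Z \otimes_{R_Z} R_{Y,Z} \iso M_{[Y,Z]}$, whose composite I take as $\rho_{Y,Z}$. Functoriality of $M$ over the chain $[Y,Z,W]$, whose three $2$-faces are $[Y,Z]$, $[Y,W]$ and $[Z,W]$, then forces the expected equality among the base changes of $\rho_{Y,Z}$, $\rho_{Z,W}$ and $\rho_{Y,W}$ to $R_{Y,Z,W}$, which is exactly the compatibility for the triple. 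Morphisms of coCartesian objects restrict on vertices to morphisms of descent data, so the recipe is functorial.

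For the inverse I would extend a datum $(M_Y,\rho_{Y,Z})$ coCartesianly from the vertices. On a chain I set $M_{[Y_1,\dots,Y_n]} := M_{Y_1} \otimes_{R_{Y_1}} R_{Y_1,\dots,Y_n}$, which is coherent because push-forward preserves finiteness (the opfibered structure on $\mathcal{D}^{\mathrm{f}}$). On an elementary face that retains the reference vertex $Y_1$ the transition is the canonical coCartesian map; on the elementary face dropping $Y_1$ it is built from the base change of $\rho_{Y_1,Y_2}$ to $R_{Y_1,\dots,Y_n}$. Each transition is coCartesian by construction, so the result lands in the coCartesian subcategory, and on vertices and gluing isomorphisms the two assignments are visibly mutually inverse. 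The one real obstacle is to check that this prescription is a well-defined functor on all of $\int S$, i.e.\@ that the elementary-face transitions obey the semi-simplicial relations, equivalently that the value on a chain reconstructed from any of its vertices through the $\rho$'s is independent of the vertex and of the reduction path. Because $S$ is the nerve of a poset, every such relation is generated by the commuting squares of two elementary faces, so all required identities reduce to the single cocycle relation on the chains of length three; no higher coherence intervenes. This is precisely the phenomenon recorded in Proposition~\ref{PROPDESCENT}, 6 (``it is essential to take $3$ terms''), transported from $\Delta^{\circ}$ to $\int S$, and it is what makes the triple condition both necessary and sufficient.
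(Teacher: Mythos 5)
Your proof is correct, but it takes a genuinely different route from the paper's. The paper disposes of the lemma in two sentences: it pushes the diagram forward along the projection $\int S \rightarrow \Delta^\circ$, replacing $(\int S, R)$ by the cosemisimplicial ring $R'(\Delta_m) := \prod_{\xi \in S_m} R(m, \xi)$; this morphism is of finite descent by Proposition~\ref{PROPDESCENT}, 3, because its fibres are the discrete finite sets $S_m$ and modules over a finite product of rings decompose. It then invokes Proposition~\ref{PROPDESCENT}, 6, which identifies coCartesian modules over a cosemisimplicial ring with classical descent data (an object over $S_0$, an isomorphism over $S_1$, a cocycle condition over $S_2$); unwinding the products recovers exactly the data in the statement. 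You instead build the equivalence by hand on $\int S$ itself, via the coCartesian extension $M_{[Y_1, \dots, Y_n]} := M_{Y_1} \otimes_{R_{Y_1}} R_{Y_1, \dots, Y_n}$. That works, and the point you isolate is the right one: since a sub-chain embeds in a chain in at most one way, $\int S$ is a thin category, so functoriality only has to be checked on composable pairs of face inclusions, the only non-trivially commuting square of elementary faces is the one in which the top vertex is dropped twice, and that square is precisely the triple cocycle condition. Your argument is longer but self-contained --- it needs neither the finiteness of $S_m$ nor the homotopy-theoretic input behind Proposition~\ref{PROPDESCENT}, 6 (connectivity and simple connectivity of the slices $\Delta^{\circ}_{\le 3} \times_{/\Delta^{\circ}} \Delta_n$) --- whereas the paper's is shorter given the machinery already in place and exhibits the lemma as a formal corollary of the general cosemisimplicial formalism. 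One cosmetic slip: $[Y,Z]$, $[Y,W]$, $[Z,W]$ are the $1$-faces (edges) of $[Y,Z,W]$, not its $2$-faces.
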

 \begin{proof}
This follows essentially from Proposition~\ref{PROPDESCENT}, 6.
For this observe that the morphism $(\int S, R) \rightarrow (\Delta^\circ, R')$, where $R'(\Delta_m) := \prod_{\xi \in S_m} R(m, \xi)$
is of finite descent by Proposition~\ref{PROPDESCENT}, 3. and the fact that $S_m$ is finite.
\end{proof}

\begin{HAUPTSATZ}\label{MAINTHEOREM}
The morphism $(\int S, R) \rightarrow (\cdot, \OO_X)$ is of descent for coherent sheaves. In other words the natural functor
\[ \cat{$(\int S, R)$-coh-cocart} \rightarrow \cat{$\OO_X$-coh} \]
is an equivalence of categories. 
\end{HAUPTSATZ}
\begin{proof}
By Lemma~\ref{LEMMADESCLOCAL} and Lemma~\ref{LEMMARLOCAL} we are reduced to a local, affine situation of the following kind: 
We may assume that $D$ is defined by an equation $f_1 \cdots f_n = 0$ such that $f_1, \dots, f_n$ are part of a minimal sequence of generators
of the maximal ideal of a point $p \in D$. We may also assume (by possibly shrinking the affine cover) that all $V(f_{i_1}, \dots, f_{i_k})$ are irreducible for each subset $\{ i_1, \dots, i_n \} \subset \{1, \dots, n\}$. 
The strata $Y$ are then all of the form $V(f_{i_1}, \dots, f_{i_k}) \setminus V(f_{i_{k+1}}, \dots, f_{i_n})$ such that $\{ i_1, \dots, i_n \} = \{1, \dots, n\}$.
Denote $R:=\OO_X(U)$ for  such an open affine subset $U$. 
Then 
\[ R_Y = \lim_l R[f_{i_{k+1}}^{-1}, \dots, f_{i_n}^{-1}] / (f_{i_1}^l, \dots, f_{i_k}^l) \]
and inductively 
\[ R_{Y_1, Y_2, \dots, Y_n} = \lim_l R_{Y_2, \dots, Y_n}[f_{i_{k+1}}^{-1}, \dots, f_{i_n}^{-1}] / (f_{i_1}^l, \dots, f_{i_k}^l) \]
for $Y_1 = V(f_{i_1}, \dots, f_{i_k}) \setminus V(f_{i_{k+1}}, \dots, f_{i_n})$. 
We similarly get a diagram $(\int S, R)$ in $\Dia^{\op}(\cat{ring})$ and have to show that  
\begin{equation}\label{eqeq}
 \cat{$(\int S, R)$-mod-f.g.-cocart}  \cong \cat{$R$-mod-f.g.} 
\end{equation}
is an equivalence. Note that our semi-simplicial set $S$ here is equal to the (semi-simplicial) nerve of the partially ordered set $\mathcal{P}(\{1, \dots, n\})$ (power set). 

We show by induction on $n$ that (\ref{eqeq}) is an equivalence as follows. Denote the diagram defined before by $(\int S^{(n)}, R^{(n)})$ where $D$ has equation $f_1 \cdots f_n$.
Decreasing $n$ by 1 means forgetting the last element $f_n$. 
For $n=1$, $S^{(1)}$ is the nerve of the partially ordered set $\{\} < \{1\}$ hence $\int S^{(1)}$ is the diagram $\lefthalfcap$ and we are precisely in the situation of Proposition~\ref{BASICFORMALDESCENT}, 1. Therefore $(\int S^{(1)}, R^{(1)}) \rightarrow (\cdot, R)$ is of descent. 
For $n>1$ consider the stratification $\{Y\}$ for $f_1, \dots, f_{n-1}$. Then cut each stratum $Y$ in the two pieces $Y' = Y \cap V(f_n)$ and $Y'' = Y \setminus Y'$.
This yields a chain of stratifications and refinements like considered in the following Lemma. 
\end{proof}

\begin{LEMMA}
We consider arbitrary stratifications of $U$ such that
each stratum is locally closed and a union of the ones considered before. They are all of the form 
$V(f_{i_1}, \dots, f_{i_k}) \setminus V(f_{i_{k+1}}, \dots, f_{i_l})$, where however not necessarily $\{ i_1, \dots, i_l \} = \{1, \dots, n\}$.
The members of such a stratification form a partially ordered set $\mathcal{P}$ as before and we denote by $S$ the corresponding (semi-simplicial) nerve.

Let $\nu: \mathcal{P}' \rightarrow \mathcal{P}$ be a morphism of such partially ordered sets which comes from
a refinement of stratifications such that precisely two strata $Y', Y''$ get mapped to one
stratum $Y$ and we have $Y' = Y \cap V(f)$ and $Y'' = Y \setminus Y'$. It induces a map of semi-simplicial nerves, denoted $\nu$ by abuse of notation, as follows: 
$(\Delta_n, Y_1 < \dots < Y_n)$ is mapped to $(\Delta_{n'}, \nu(Y_1) < \dots < \nu(Y_{n}))$ where in the sequence $\nu(Y_1) < \dots < \nu(Y_n)$ double entries have been deleted such that every entry appears only once. 

Let $R$, resp.\@ $R'$ be the corresponding ring-valued functors. 

Then the functor induced by $\nu$
\[ \cat{$(\int S', R')$-mod-f.g.-cocart}  \cong \cat{$(\int S, R)$-mod-f.g.-cocart} \]
is an equivalence. 
\end{LEMMA}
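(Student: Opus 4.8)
The plan is to realise this refinement as a single global instance of the basic formal/open descent of Proposition~\ref{BASICFORMALDESCENT}, glued over the nerve, and to verify it through the fibre criterion of Lemma~\ref{LEMMAREFINE}. First I would record the rings attached to the split. Since $Y''=Y\setminus V(f)$ is the locus where $f$ is invertible and $Y'=Y\cap V(f)$ is the completion of $Y$ along $f$, the construction~(\ref{constructr}) gives $R_{Y''}=(R_Y)_f$, $R_{Y'}=\widehat{R_Y}$ (the $f$-adic completion) and $R_{Y',Y''}=\widehat{R_Y}_f$, and likewise, for any chain running through $Y$, the three refined chains carry the localisation, the completion, and the completion-then-localisation of the unrefined ring. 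Because $f=f_n$ is part of a regular system of parameters, it stays a non-zero divisor after the localisations and completions along the other components that enter the definition of $R_Y$; hence $R_Y$ is noetherian and the axioms (a)--(c) of Proposition~\ref{BASICFORMALDESCENT} hold with base ring $R_Y$ and element $f$.

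I would then apply Lemma~\ref{LEMMAREFINE} to the evident morphism $(\nu,\mu)\colon(\int S',R')\rightarrow(\int S,R)$, where $\mu\colon R\circ\nu\Rightarrow R'$ is given by the structure maps $R_Y\rightarrow R_{Y'},R_{Y''},R_{Y',Y''}$. The initial objects of $\int S$ are exactly the one-element chains $(1,[Z])$, and every object receives a morphism from its first vertex, so the standing hypothesis of the lemma holds. It remains to check that $p_j^{*}$ is fully faithful for every $j=(n,\xi)$ and that $p_j$ is of finite descent for the initial $j$. The fibre $F_j=j\times_{/\int S}\int S'$ consists of the chains $\eta'$ in $\mathcal{P}'$ with $\nu(\eta')\supseteq\xi$. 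When $Y\notin\xi$ this fibre has the unique lift of $\xi$ to $\mathcal{P}'$ as an initial object, so by Proposition~\ref{PROPDESCENT}, 2.\@ (an initial object furnishes a section admitting a $2$-morphism to the identity) $p_j$ is already of finite descent, a fortiori $p_j^{*}$ is fully faithful; this also settles finite descent for every initial $(1,[Z])$ with $Z\neq Y$.

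The interesting fibres are those with $Y\in\xi$. Here $F_j$ carries, in place of the $Y$-vertex, the cospan $\xi[Y\!\to\!Y']\rightarrow\xi[Y\!\to\!Y'\!<\!Y'']\leftarrow\xi[Y\!\to\!Y'']$, while every remaining vertex has a unique lift. Deleting from a general $\eta'\in F_j$ every vertex not lying over $\xi$ defines a retraction $r\colon F_j\rightarrow\widetilde{D}$ onto this relative cospan, and the inclusions $r(\eta')\hookrightarrow\eta'$ assemble into a $2$-morphism $\iota r\Rightarrow\mathrm{id}$; by Proposition~\ref{PROPDESCENT}, 2.\@ the inclusion $\iota\colon\widetilde{D}\hookrightarrow F_j$ is thus an equivalence on coCartesian modules, so it suffices to treat $\widetilde{D}$. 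For the initial object $(1,[Y])$ the diagram $\widetilde{D}$ is precisely the basic cospan $R_{Y''}\rightarrow R_{Y',Y''}\leftarrow R_{Y'}$ over $R_Y$, which is of finite descent by Proposition~\ref{BASICFORMALDESCENT}, 1. For a general $\xi\ni Y$ the diagram $\widetilde{D}$ is obtained from the basic $D$ by applying the localisation functors $I_{f_i}$ for the vertices of $\xi$ above $Y$ and the completion functors $C_{I_i}$ for those below $Y$, so Proposition~\ref{BASICFORMALDESCENT}, 2.\@ shows that the unit $\mathrm{id}\rightarrow\widetilde{p}_*\widetilde{p}^{*}$ is an isomorphism, i.e.\@ $p_j^{*}$ is fully faithful. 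Together with the previous paragraph, both hypotheses of Lemma~\ref{LEMMAREFINE} are met, and the lemma yields that $(\nu,\mu)$ is of finite descent, whence the asserted equivalence.

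The main obstacle I anticipate lies in this last step: one must confirm that the retraction $r$ is genuinely functorial in spite of the deletion of double entries built into $\nu$, and --- more substantially --- that the relative cospan attached to a chain $\xi\ni Y$ is really the diagram $I_{f_i}C_{I_i}\cdots D$ to which Proposition~\ref{BASICFORMALDESCENT}, 2.\@ applies, with the completion and localisation operations occurring in an order for which the injectivity hypothesis $I_{f_i}C_{I_i}\cdots R\hookrightarrow I_{f_i}C_{I_i}\cdots R_f$ holds. The latter reduces once more to $f_n$ remaining a non-zero divisor under the operations attached to the other components, which is exactly where the strict normal crossings assumption is needed.
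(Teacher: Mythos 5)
Your argument is correct and follows essentially the same route as the paper: reduce via Lemma~\ref{LEMMAREFINE} to showing that the fibres over one-element chains $(\Delta_0,Y)$ are of finite descent and that all other $p_\xi^*$ are fully faithful, and then identify these fibres with the basic cospan of Proposition~\ref{BASICFORMALDESCENT} (parts 1 and 2 respectively). The only cosmetic difference is that where you retract the comma category onto the relative cospan by deleting vertices, the paper instead observes that $\nu\colon\int S'\to\int S$ is a Grothendieck fibration and invokes the fibred-product variant of the lemma --- two devices that amount to the same adjunction.
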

\begin{proof}
{\bf Claim:} $\nu: \int S' \rightarrow \int S$ is a Grothendieck fibration.

{\em Proof of the claim: } For the fiber of $\nu$ over a $\xi=(\Delta_n, Y_1 < \dots < Y_n)$ there are two possibilities:

\begin{enumerate}
\item If $Y$ does not occur in the list, it consists of $\xi$ itself, considered as an element of $\int S'$.
\item If $Y$ occurs in the list, the fiber consists of the diagram
\[ \xymatrix{
 & (\Delta_n, Y_1 < \dots < Y' < \dots < Y_n)  \ar[d] \\
(\Delta_n, Y_1 < \dots < Y'' < \dots < Y_n)  \ar[r] &  (\Delta_{n+1}, Y_1 < \dots < Y' < Y'' < \dots < Y_n)
} \]
\end{enumerate}

For a morphism in $\int S'$, say $(\Delta_{n'}, Y_{i_1}< \dots< Y_{i_{n'}}) \rightarrow (\Delta_n, Y_1< \dots< Y_n)$, there is an obvious pull-back functor between these fibers
which establishes $\nu$ as a Grothendieck fibration.

Using the claim and Proposition~\ref{PROPDESCENT}, 3.\@ we would have to show that the fibers of $(S'_\xi , R'|_\xi) \rightarrow (\xi, R(\xi))$ are of descent for any $\xi = (\Delta_k, Y_1<\cdots<Y_k) \in \int S'$. Actually we have the refinement
Lemma~\ref{LEMMAREFINE} which reduces us to prove that for all $Z$ the fiber above $(\Delta_0, Z)$ (these are the initial objects of $\int S$ in the sense of Lemma~\ref{LEMMAREFINE}) is of descent, and that for all other $p_\xi: (S'_\xi , R'_\xi) \rightarrow (\xi, R(\xi))$ the pull-back $p_\xi^*$ is fully faithful.
In other words, we have to see that 

1. (only $Z=Y$ is non-trivial)
\[ p_Y: \left( \vcenter{ \xymatrix{
 & R_{Y'}  \ar[d] \\
R_{Y''} \ar[r] & R_{Y',Y''} 
} } \right) \rightarrow (R_Y) \]
is of  descent for finitely generated modules. This is Proposition~\ref{BASICFORMALDESCENT}, 1.

2. 
\[ p_\xi: \left( \vcenter{ \xymatrix{
 & R_{Y_1,\dots,Y',\dots,Y_n}  \ar[d] \\
R_{Y_1,\dots,Y'',\dots,Y_n} \ar[r] & R_{Y_1,\dots,Y',Y'',\dots,Y_n} 
} } \right) \rightarrow (R_{Y_1,\dots,Y,\dots,Y_n}) \]
is such that $p_\xi^*$ is fully-faithful (on finitely generated modules). This is Proposition~\ref{BASICFORMALDESCENT}, 2.
\end{proof}

\newpage

\bibliographystyle{abbrvnat}
\bibliography{descente}

\end{document}